\documentclass[10pt]{amsart}
\usepackage{amsmath,amssymb}
\usepackage[latin1]{inputenc}
\usepackage[dvips]{graphicx}



\newtheorem{theorem}{Theorem}[section]
\newtheorem{lemma}[theorem]{Lemma}
\newtheorem{proposition}[theorem]{Proposition}

\newtheorem{corollary}[theorem]{Corollary}
\newtheorem{remark}[theorem]{Remark}

\newtheorem{problem}[theorem]{Problem}

\newcommand{\Gal}{{\rm Gal}}

\newcommand{\Cen}{\mbox{\rm C}}

\newcommand{\Aut}{\mbox{\rm Aut}}

\newcommand{\inv}{^{-1}}

\newcommand{\Z}{{\mathbb Z}}
\newcommand{\Q}{{\mathbb Q}}

\newcommand{\C}{{\mathbb C}}

\newcommand{\K}{{\mathbb K}}

\newcommand{\GL}{{\rm GL}}

\newcommand{\SGL}{{\rm SGL}}

\newcommand{\tr}{{\rm tr}}

\newcommand{\ind}{{\rm ind}}

\newcommand{\matriz}[1]{\begin{array} #1 \end{array}}
\newcommand{\pmatriz}[1]{\left(\begin{array} #1 \end{array}\right)}
\newcommand{\GEN}[1]{\langle #1 \rangle}

\newcommand{\diag}{\operatorname{diag}}

\newcommand{\Hom}{\operatorname{Hom}}

\title{Zassenhaus conjecture for cyclic-by-abelian groups}

\author{Mauricio Caicedo}
\address{Departamento de Matem\'{a}ticas, Universidad de Murcia,  30100 Murcia, Spain}
\email{mauriciojc02@hotmail.com}

\author{Leo Margolis}
\address{Fachbereich Mathematik, Universitaet Stuttgart, Pfaffenwaldring 57, 70569 Stuttgart, Germany}
\email{leo.imsueden@yahoo.com}

\author{\'{A}ngel del R\'{\i}o}
\address{Departamento de Matem\'{a}ticas, Universidad de Murcia,  30100 Murcia, Spain}
\email{adelrio@um.es}

\thanks{The research is partially supported by the Ministerio de Ciencia y Tecnolog\'{\i}a of Spain and Fundaci\'{o}n S\'{e}neca of Murcia and Die Studienstiftung des deutschen Volkes}

\begin{document}

\subjclass[2010]{16U60, 16S34; Secondary 20C05}

\keywords{Integral group ring, torsion unit, Zassenhaus Conjecture}

\begin{abstract}
Zassenhaus Conjecture for torsion units states that every augmentation one torsion unit of the integral group ring of a finite group $G$ is conjugate  to an element of $G$ in the units of rational group algebra $\Q G$. This conjecture has been proved for nilpotent groups, metacyclic groups and some other  families of groups. We prove the conjecture for cyclic-by-abelian groups.
\end{abstract}

\maketitle


In this paper $G$ is a finite group and $RG$ denotes the group ring of $G$ with coefficients in a ring $R$. The units of $RG$ of augmentation one are usually called normalized units. In the 1960s Hans Zassenhaus established a series of conjectures about the finite subgroups of normalized units of $\Z G$. Namely he conjectured that every finite group of normalized units of $\Z G$ is conjugate to a subgroup of $G$ in the units of $\Q G$. These conjecture is usually denoted (ZC3), while the version of (ZC3) for the particular case of subgroups of normalized units with the same cardinality as $G$ is usually denoted (ZC2). These conjectures have important consequences. For example, a positive solution of (ZC2) implies a positive solution for the Isomorphism and Automorphism Problems (see \cite{SehgalBook2} for details). The most celebrated positive result for Zassenhaus Conjectures is due to Weiss \cite{WeissCrelle} who proved (ZC3) for nilpotent groups. However Roggenkamp and Scott founded a counterexample to the Automorphism Problem, and henceforth to (ZC2) (see \cite{Roggenkamp} and \cite{Klingler}). Later Hertweck \cite{HertweckIso} provided a counterexample to the Isomorphism Problem.

The only conjecture of Zassenhaus that is still up is the version for cyclic subgroups namely:

\begin{quote}
\textbf{Zassenhaus Conjecture for Torsion Units (ZC1)}.
If $G$ is a finite group then every normalized torsion unit of $\Z G$ is conjugate in $\Q G$ to an element of $G$.
\end{quote}

Besides the family of nilpotent groups, (ZC1) has been proved for some concrete groups \cite{BovdiHertweck,BovdiHofertKimmerle,HofertKimmerle,LutharPassiA5,LutharTrammaComm,HertweckA6}, for groups having a Sylow subgroup with an abelian complement \cite{HertweckAlgColloq}, for some families of cyclic-by-abelian groups \cite{LutharBhandari,LutharTramaJIndian,LutharSehgal,MarciniakRitterSehgalWeiss,PolcinoSehgalJNumberT,PolcinoRitterSehgal,delRioSehgal,RitterSehgalMathAnn} and some classes of metabelian groups not necessarily cyclic-by-abelian \cite{MarciniakRitterSehgalWeiss,SehgalWeissJAlgebra}.
Other results on Zassenhaus Conjectures can be found in \cite{SehgalBook2,SehgalEncl} and \cite[Section~8]{SehgalHandbook}

The latest and most general result for (ZC1) on the class of cyclic-by-abelian groups is due to Hertweck \cite{HertweckEdinb} who proved (ZC1) for finite groups of the form $G=AX$ with $A$ a cyclic normal subgroup of $G$ and $X$ an abelian subgroup of $G$. This includes the class of metacyclic groups that was not covered in previous results.
The aim of this paper is to prove (ZC1) for arbitrary cyclic-by-abelian groups.
Formally we prove

\begin{quote}
{\bf Theorem}. Let $G$ be a finite cyclic-by-abelian group. Then every normalized torsion unit of $\Z G$ is conjugate in $\Q G$ to an element of $G$.
\end{quote}

Our strategy uses induction on the order of the group $G$ and on the order of the torsion unit. In other words we consider a finite cyclic-by-abelian group $G$, which is a minimal counterexample to (ZC1), and $u$ a torsion unit in $\Z G$, which is a minimal counterexample to (ZC1). Here minimal means ``of minimal order''. In particular, we assume that (ZC1) holds for proper subgroups and quotients of $G$ and for units in proper subgroups of the group generated by $u$.

Most of the ideas used in this paper are either due to or inspired from the techniques introduced by Hertweck in \cite{HertweckEdinb} which we have adapted in some steps of the proof to avoid some difficulties appearing in the general case which are not encountered in the hypothesis of \cite{HertweckEdinb}. For example, the strategy in \cite{HertweckEdinb} for the case when $G=AX$ with $A$ cyclic normal and $X$ abelian, consists in first proving (ZC1) for torsion units $u$ with augmentation 1 modulo $A$, with the help of a result of Cliff and Weiss for the matrix version of Zassenhaus Conjecture (Theorem~\ref{TCW}) and a beautiful use of Weiss permutation module theorem \cite{WeissAnnals}, then using this to prove the results for units with augmentation 1 modulo $C_G(A)$ and then reducing the general case to this special case. In several steps of the proof one uses a faithful linear representation of $A$ and the fact that $C_G(A)=AC_X(A)=AZ(G)$ (and hence $C_G(A)$ is cyclic-by-central). In the words of Hertweck the last fact is ``the main reason for assuming that $A$ is covered by an abelian subgroup--rather than assuming that $G/A$ is abelian''.

In our strategy the subgroup $D=Z(C_G(A))$, for $A$ a cyclic subgroup of $G$ containing $G'$, plays a very important role.
We first prove (ZC1) for units with augmentation 1 modulo $D$ using local methods over the $p$-adic integers and then we prove (ZC1) for the remaining units using the so called Luthar-Passi Method. As $D$ is not cyclic-by-central it is not possible to use neither Cliff-Weiss Theorem, nor a faithful linear character of an appropriate cyclic subgroup of $G$. Instead, for the first part of the proof we adapt the $p$-adic methods of Hertweck and Cliff-Weiss to our situation with a careful revision of their proofs, using linear characters of $D$ with kernels not intersecting $A$, and in the second part we use a family of linear characters of $D$ with kernel not containing any normal subgroup of $G$. This introduces some difficulties in the arguments which makes the proofs more involved than in \cite{HertweckEdinb}.

%
%
%

\section{Notation, preliminaries and some tools}

In this section we establish the general notation and collect some known results, which will be used throughout the paper.

As it is costumary $\varphi$ denotes Euler's totient function. The cardinality of a set $X$ is denoted $\lvert X \rvert$. For every integer $n$ we let $\zeta_n$ denote a fixed complex primitive root of unity of order $n$. The ring of $p$-adic integers, for $p$ a prime integer, is denoted $\Z_p$.
%

We use the standard group theoretical notation. In particular, if $G$ is a group, then $Z(G)$ denotes the center of $G$, $G'$ the commutator subgroup of $G$ and $\exp(G)$ the exponent of $G$. If $g,h\in G$ then $|g|$ denotes the order of $g$, $g^h=h\inv g h$, $(g,h)=g\inv g^h = g\inv h\inv gh$ and $g^G$ denotes the conjugacy class of $g$ in $G$. If $X\subseteq G$ then $\GEN{X}$ denotes the subgroup generated by $X$, $C_G(X)=\{g\in G:(x,g)=1 \ \text{for every } x\in X\}$, the centralizer of $X$ in $G$ and $N_G(X)=\{g\in G:X^g\subseteq X \}$, the normalizer of $X$ in $G$.
Let $p$ be a prime integer.
If $g$ has finite order then $g_p$ and $g_{p'}$ denote the $p$-part and $p'$-part of $g$, respectively.
If $G$ has a unique $p$-Sylow subgroup (respectively a unique $p'$-Hall subgroup) then it is denoted $G_p$ (respectively $G_{p'}$).

In the remainder of this section $R$ stands for a commutative ring.
If $N$ is a normal subgroup of $G$ then the $N$-augmentation map of $RG$ is the unique ring homomorphism $\omega_N:RG \rightarrow R(G/N)$ extending the natural map $G\rightarrow G/N$ and acting on $R$ as the identity. In particular $\omega=\omega_G$ is the augmentation map of $RG$.
Let $r=\sum_{g\in G} r_g g\in RG$ with $r_g\in R$ for every $g$. For every $g\in G$, let $\varepsilon_g^G(r)$ denote the partial augmentation of $r$ in the conjugacy class of $g$ in $G$, that is
    $$\varepsilon^G_g(r) = \sum_{h \in g^G} r_h.$$
If the group $G$ is clear from the context we simply write $\varepsilon_g(r)$.
Conjugacy classes in $RG$ and partial augmentation are strongly related. We collect in the following remark some easy facts about this relation.

\begin{remark}\label{ConjugatePartialAugmentation}
{\rm If $x\in G$ then $\varepsilon_x^G:RG \rightarrow R$ is an $R$-linear map which satisfies $\varepsilon_x^G(uv)=\varepsilon_x^G(vu)$ for every $u,v\in RG$. Using this it is easy to prove that if $u\in RG$ and $g\in G$ are conjugate in $RG$, then $\varepsilon_g^G(u)=1$ and $\varepsilon_x^G(u)=0$ for every $x\in G$ with $x\not\in g^G$. Hence, for such $u$ and $g$ and a normal subgroup $N$ of $G$ we have $\omega_N(u)\ne 0$ if and only if $g\in N$ and in that case $\omega_N(u)=1$.}
\end{remark}

One of the main tools to study (ZC1) is the following well known result which is somehow a converse of Remark~\ref{ConjugatePartialAugmentation} (see e.g. \cite[Lemma~41.5]{SehgalBook2}).

\begin{proposition}\label{ZC1PAugmentation}
Let $u$ be a normalized torsion unit of $\Z G$. Then $u$ is conjugate in $\Q G$ to an element of $G$ if and only if $\varepsilon_g^G(v)\ge 0$ for every $v\in \GEN{u}$ and every $g\in G$.
\end{proposition}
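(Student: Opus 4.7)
Both implications proceed through the identity
\[
\chi(v) \;=\; \sum_{g^G}\varepsilon^G_g(v)\,\chi(g),
\]
valid for any $v\in\Z G$ and any $\chi\in\mathrm{Irr}(G)$ after extending $\chi$ linearly to $\C G$; it simply reflects the fact that $\chi$ is constant on $G$-conjugacy classes.

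The forward direction is immediate from Remark~\ref{ConjugatePartialAugmentation}: if $u=x^{-1}gx$ with $g\in G$ and $x\in(\Q G)^{\times}$, then for every $k\ge 0$ the power $v=u^k=x^{-1}g^kx$ is conjugate in $\Q G$ to the group element $g^k$, which forces $\varepsilon_h^G(v)\in\{0,1\}$ for every $h\in G$, and in particular $\varepsilon_h^G(v)\ge 0$.

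For the converse, I would exploit integrality and non-negativity simultaneously. For each $v=u^k\in\langle u\rangle$, the partial augmentations $\{\varepsilon^G_C(v)\}_C$ are non-negative integers (since $v\in\Z G$) summing to $\omega(v)=1$ (since $v$ is normalized), so there must be a unique conjugacy class $C_k$ of $G$ with $\varepsilon^G_{C_k}(v)=1$ and all other partial augmentations of $v$ vanishing. Picking a representative $g_k\in C_k$, the displayed identity then yields $\chi(u^k)=\chi(g_k)$ for every $k\ge 1$ and every $\chi\in\mathrm{Irr}(G)$.

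To conclude that $u$ is conjugate to $g:=g_1$ in $\Q G$, I would invoke the standard criterion stemming from the Wedderburn decomposition $\Q G\cong\prod_i M_{n_i}(D_i)$: two torsion (hence semisimple) elements of $\Q G$ are conjugate in $\Q G$ if and only if their reduced characteristic polynomials match in every simple component, equivalently, if and only if $\chi(u^k)=\chi(g^k)$ for every $k\ge 1$ and every $\chi\in\mathrm{Irr}(G)$. Combined with the equalities $\chi(u^k)=\chi(g_k)$ already in hand, the task reduces to verifying the cross-power consistency $g^k\in C_k$ for every $k$. This last step is where I expect the main obstacle: the elementary counting argument only delivers a witness class $C_k$ for each $k$ separately, and producing a single $g\in G$ whose powers simultaneously witness every $u^k$ requires the genuine representation-theoretic input handled in \cite[Lemma~41.5]{SehgalBook2}.
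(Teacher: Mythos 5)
Your forward direction is fine, and your reduction of the converse to the cross-power consistency $g_1^k\in C_k$ is also correct: the conjugacy criterion ``$\chi(u^k)=\chi(g^k)$ for all $\chi$ and $k$'' is a valid restatement of eigenvalue-multiplicity matching for torsion elements of $\Q G$, and non-negativity plus integrality plus $\omega(u^k)=1$ does pin down a unique class $C_k$ for each power. (One wording slip: $v\in\Z G$ only gives integrality of the $\varepsilon^G_C(v)$; their non-negativity is the hypothesis, not a consequence of integrality.) Note also that the paper does not supply its own proof of Proposition~\ref{ZC1PAugmentation}; it simply points to \cite[Lemma~41.5]{SehgalBook2}, so there is no in-paper argument to compare against.

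The gap you flag is genuine and is, in effect, the entire content of the cited lemma, so your proposal stops exactly where the real work begins. Granting that $|g_1|$ divides $n:=|u|$ (itself the Berman/Cohn--Livingstone fact recorded as Proposition~\ref{Hertweck}(\ref{AugmentationOrder})), the Galois automorphism $\sigma_k\colon\zeta_n\mapsto\zeta_n^k$ does give $\chi(g_k)=\chi(u^k)=\sigma_k(\chi(u))=\sigma_k(\chi(g_1))=\chi(g_1^k)$ for $\gcd(k,n)=1$, so the coprime powers come for free. But for a proper divisor $d\mid n$ nothing in what you have assembled ties $C_d$ to the $d$-th power of $C_1$: the hypothesis constrains the partial augmentations of each $u^d$ separately, and the single-class conclusion for $u^d$ is a priori independent of the one for $u$. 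Closing this requires an actual argument --- typically either the eigenvalue-multiplicity formula (the one recorded later in the paper as~(\ref{Multiplicidad})) together with non-negativity and integrality of every $\mu_{\rho(u^d)}(\zeta)$, or the character computation for Weiss's double-action module $M^{\alpha_u}$ over $C_n\times G$. Without one of these inputs the elementary counting you did does not yield the conclusion.
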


Proposition~\ref{ZC1PAugmentation} is commonly presented in the following equivalent form: A normalized torsion unit $u$ of $\Z G$ is conjugate in $\Q G$ to an element of $G$ if and only if for every $v\in \GEN{u}$, there is $g\in G$ such that for every $x\in G$ we have $\varepsilon_x^G(v)\ne 0$ if and only if $x\in g^G$.

The following proposition collects some results from \cite{HertweckAlgColloq} and \cite{HertweckEdinb} which will be very useful in our arguments.

\begin{proposition}\label{Hertweck}
Let $G$ be a finite group and $p$ a prime integer.
\begin{enumerate}
\item Let $R$ be a $p$-adic ring with quotient field $K$ and $u$ a normalized torsion unit of $RG$.
\begin{enumerate}
\item\label{pAugmentation}  Suppose $\omega_P(u)=1$ for $P$ a normal $p$-subgroup of $G$. Then $u$ is conjugate in $KG$ to an element of $P$.
\item\label{pPartesNoConjugadas} Suppose that the $p$-part of $u$ is conjugate to an element $x$ of $G$ in the units of $RG$ and $g$ is an element of $G$ such that the $p$-parts of $x$ and $g$ are not conjugate in $G$. Then $\varepsilon_g(u)=0$.
\end{enumerate}
\item Let $u$ be a torsion unit of $\Z G$.
\begin{enumerate}
\item\label{AugmentationOrder} If $\varepsilon_g(u)\ne 0$ with $g\in G$ then the order of $g$ divides the order of $u$.
\item\label{ConjugadopAdico} Assume that $\omega_P(u)=1$ with $P$ a cyclic normal $p$-subgroup of $G$. Then $u$ is conjugate in $\Q G$ to an element $x\in P$. If moreover, $\Cen_G(x)$ has a normal $p$-complement, then $u$ and $x$ are conjugate in $\Z_p G$.
\end{enumerate}
\end{enumerate}
\end{proposition}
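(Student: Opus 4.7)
The proposition packages four independent facts of Hertweck and Cliff--Weiss; my plan is to address them in the stated order, each feeding into the next, with the common theme being $p$-adic completion combined with Brauer-character machinery.

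For part (1a), the hypothesis $\omega_P(u)=1$ places $u-1$ in the two-sided ideal of $RG$ generated by the augmentation ideal of $RP$. The strategy is first to conjugate $u$ inside $(RG)^\times$ into a torsion unit sitting in $RP$: decompose $RG$ as a crossed product of $RP$ with a transversal of $G/P$ and use the $p$-adic nature of $R$ to remove the corresponding $G/P$-coboundary obstruction to such a retraction. Once $u$ lies in $(RP)^\times$ (up to $(RG)^\times$-conjugacy), Weiss's theorem on torsion units in $p$-adic group rings of $p$-groups yields a conjugation to an element of $P$, taking place inside $(KP)^\times \subseteq (KG)^\times$.

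For (1b), the plan is modular reduction plus Brauer characters: conjugate elements of $(RG)^\times$ share their Brauer character after reducing modulo the maximal ideal of $R$, hence $\overline{u_p}$ and $\overline{x}$ have equal Brauer character. Expressing $\varepsilon_g(u)$ via orthogonality of Brauer characters at the $p$-regular classes, one deduces $\varepsilon_g(u)=0$ whenever the $p$-parts of $g$ and $x$ are not $G$-conjugate. Part (2a), Hertweck's order-divisibility theorem, is the deepest of the four and I expect it to be the main technical obstacle. The plan is: for each prime $p$ dividing $|g|$ but not $|u|$, the $p$-modular image $\bar u$ has $p$-regular order, so its $p$-part is trivial; lifting this via (1a) applied over $\Z_p$ to identify the $p$-part of $u$, one invokes (1b) with $x=1$ to conclude $\varepsilon_g(u)=0$. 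Iterating over all ``bad'' primes $p \mid |g|$ with $p \nmid |u|$ then contradicts $\varepsilon_g(u)\ne 0$. The subtle point is the coherent choice of lifts across primes and the verification that the hypothesis of (1b) is met for the $p$-regular constituent at each prime, which is what makes Hertweck's proof long rather than cosmetic.

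Finally, for (2b), apply (1a) with $R=\Z_p$ to obtain $x \in P$ with $u$ conjugate to $x$ in $\Q_p G$; since $P$ is cyclic, the partial augmentations $\varepsilon_y^G(u)$ for $y \in P$ are rational (coming from $u \in \Z G$) and determine $x$ uniquely up to $G$-conjugacy via Remark~\ref{ConjugatePartialAugmentation}, so the $\Q_p G$-conjugacy descends to a $\Q G$-conjugacy. The upgrade to $\Z_p G$-conjugacy under the normal $p$-complement hypothesis is the main integrality obstruction: the plan is to use $\Cen_G(x) = P \rtimes H$ with $|H|$ invertible in $\Z_p$, so that $\Z_p\Cen_G(x)$ is Morita-equivalent to a nice semiperfect order, and a Schur--Zassenhaus style lifting along the $H$-component refines the $\Q_p G$-conjugator to an element of $(\Z_p G)^\times$.
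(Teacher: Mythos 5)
The paper does not prove this proposition at all: it is stated as a package of results imported from \cite{HertweckAlgColloq} and \cite{HertweckEdinb}, so there is no proof of the paper's to compare against. What you should check is whether your sketches stand on their own, and there two problems appear. For (1a), the reduction ``first conjugate $u$ into $(RP)^\times$ by removing a $G/P$-coboundary obstruction'' is not a recognizable argument and would need to be substantiated; Hertweck's actual route goes directly through Weiss's permutation module theorem applied to $RG$ as an $R(\langle u\rangle\times G)$-module (the double-action formalism the paper recalls immediately after this proposition), with no intermediate step landing in $RP$. You also assert that the final conjugation ``takes place inside $(KP)^\times$''; in general the conjugating element lies only in $(KG)^\times$.

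For (2a) there is a genuine gap. Your plan treats only primes $p$ with $p\mid |g|$ and $p\nmid |u|$: there $u_p=1$, and (1b) with $x=1$ indeed forces $\varepsilon_g(u)=0$ whenever $g_p\ne 1$. But ``$|g|$ divides $|u|$'' also requires $|g_p|\mid |u_p|$ for every $p$ dividing \emph{both} $|g|$ and $|u|$, and iterating over ``bad primes $p\mid|g|$ with $p\nmid|u|$'' says nothing about this case. To invoke (1b) here you would first have to identify $u_p$, up to $(RG)^\times$-conjugacy, with a group element of the correct $p$-power order --- but that identification is exactly where the content of Hertweck's theorem lies and is not supplied by your outline. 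The sketches for (1b) and (2b) point in a plausible direction but are too thin to verify (for (2b), the descent from $\Q_pG$-conjugacy to $\Q G$-conjugacy should be run through Proposition~\ref{ZC1PAugmentation} after observing that the partial augmentations of every power of $u$ are then those of group elements, rather than asserted from rationality alone; and the final $\Z_pG$-step needs more than a ``Schur--Zassenhaus style lifting'' gesture). Since the paper merely cites these statements, the right move is to cite them too rather than reprove them.
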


We now present a matrix version of (ZC1) which was introduced in \cite{MarciniakRitterSehgalWeiss} as a strategy to prove (ZC1) in some cases.
If $k$ is a positive integer then the action of $\omega$ on the matrix entries defines a ring homomorphism $M_k(RG)\rightarrow M_k(R)$. It restricts to a group homomorphism $\GL_k(RG)\rightarrow \GL_k(R)$. Following \cite{CliffWeiss} we let $\SGL_k(RG)$ denote the kernel of this group homomorphism. The matrix version of (ZC1) is the following problem:

\begin{problem}\label{ProblemaCW}
Let $G$ be a finite group. Is every element of finite order of $\SGL_k(\Z G)$ conjugate in $\GL_k(\Q G)$ to a diagonal matrix with diagonal entries in $G$?
\end{problem}

Cliff and Weiss solved this problem for nilpotent groups and arbitrary $k$.

\begin{theorem}\label{TCW} \cite{CliffWeiss}
If $G$ is a finite nilpotent group then Problem~\ref{ProblemaCW} has a positive solution for every $k\ge 1$ if and only if $G$ has at most one non-cyclic Sylow subgroup.
\end{theorem}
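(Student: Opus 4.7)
The plan is to prove both directions separately, relying on Weiss's permutation module theorem for the ``if'' direction and an explicit lattice-theoretic construction for the ``only if'' direction.

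For the ``only if'' direction, suppose $G$ has two non-cyclic Sylow subgroups $G_p$ and $G_q$. The strategy is to exhibit a torsion element of $\SGL_k(\Z G)$ (for some $k$) that fails to be conjugate in $\GL_k(\Q G)$ to a diagonal matrix with entries in $G$. One exploits the failure of the local-global principle in the genus of $\Z G$-lattices: when two distinct primes each carry non-cyclic $p$-local structure, one can produce two locally free $\Z G$-lattices $L_1, L_2$ of the same rank that are locally isomorphic at every completion but not isomorphic over $\Z G$. The transition isomorphism between $\Q \otimes L_1$ and $\Q \otimes L_2$ then realizes a torsion unit of a suitable matrix ring whose conjugacy class cannot be diagonalized, for such a diagonalization would force $L_1 \cong L_2$ globally.

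For the ``if'' direction, assume $G$ has at most one non-cyclic Sylow subgroup $G_{p_0}$, and let $u \in \SGL_k(\Z G)$ be a torsion element. First, for each prime $p \neq p_0$, I would use the decomposition $G = G_p \times G_{p'}$, together with the Wedderburn decomposition of $\Z_p G_{p'}$, to reduce the $p$-local analysis to the case of a cyclic $p$-group $G_p$; here the matrix analogue of Proposition~\ref{Hertweck}\eqref{pAugmentation} (accessible by elementary means for cyclic $p$-groups) yields $\Z_p G$-conjugacy of $u$ to a diagonal matrix with entries in $G$. Next, at the prime $p_0$, view $M = (\Z_{p_0} G)^k$ as a $\Z_{p_0} \GEN{u}$-lattice under left multiplication by $u$; its rational extension is a permutation module for $\GEN{u}$ (namely $k$ copies of the regular representation of $G$), so Weiss's permutation module theorem, applied with crucial use of the fact that $G_q$ is cyclic for every $q \neq p_0$, forces $M$ itself to be a $\Z_{p_0} \GEN{u}$-permutation lattice. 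A permutation basis for $M$ then produces an element of $\GL_k(\Z_{p_0} G)$ conjugating $u$ to a diagonal of group elements, and a local-global gluing argument yields conjugacy over $\Q G$.

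The main obstacle is the passage through Weiss's theorem at the non-cyclic prime $p_0$: the theorem requires an integrality-rigidity hypothesis on the lattice which translates precisely to the cyclicity of all Sylow subgroups other than $G_{p_0}$. Verifying that ``at most one non-cyclic Sylow'' is exactly the condition delineating when the lattice analysis succeeds—and producing genuine counterexamples when this fails—is the heart of the theorem, and links the two directions into a single rigidity phenomenon for $p$-adic lattices over nilpotent groups.
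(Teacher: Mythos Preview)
The paper does not prove Theorem~\ref{TCW}: it is quoted verbatim from Cliff and Weiss \cite{CliffWeiss} as an external input, and the only related argument in the paper is the short proof of Lemma~\ref{character}, which merely extracts from \cite{CliffWeiss} the character-theoretic identity needed later. So there is no ``paper's own proof'' to compare your proposal against.

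As for your sketch on its own merits, the overall architecture is in the right spirit---Weiss's permutation-lattice theorem is indeed the engine behind the ``if'' direction in \cite{CliffWeiss}---but several steps are not yet proofs. In the ``if'' direction, your invocation of Weiss's theorem is imprecise: that theorem concerns $\Z_p P$-lattices for a $p$-group $P$ and a normal subgroup condition, not a direct statement about $\Z_{p_0}\GEN{u}$-lattices, and the phrase ``with crucial use of the fact that $G_q$ is cyclic for every $q\neq p_0$'' does not identify the actual hypothesis Weiss's theorem needs; in Cliff--Weiss the cyclicity of the other Sylows enters earlier, in the reduction and in the character computation that shows the relevant virtual permutation character is genuine. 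In the ``only if'' direction, your genus/local-global narrative is suggestive but does not by itself produce a \emph{torsion} element of $\SGL_k(\Z G)$ that fails diagonalization; Cliff and Weiss give a concrete construction, and you would need to exhibit one as well. If you want an honest proof rather than a plausibility outline, you should follow \cite{CliffWeiss} directly.
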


Let $N$ be a normal subgroup of $G$ with $k=[G:N]$. Then $RG$ is a $(RG,RN)$-bimodule and the right $RN$-module $RG$ is free with a basis formed by a transversal of $N$ in $G$.
For every $r\in RG$ let $\rho_N(r)$ denote the matrix representation of left multiplication by $r$ with respect to this basis.
This defines a ring homomorphism $\rho_N:RG \rightarrow M_k(RN)$.
If $r$ is a unit of $RG$ such that $\omega_N(r)=1$ then $\rho_N(r)\in \SGL_k(RN)$.
If $x\in N$ then, by \cite[Lemma~41.10]{SehgalBook2}, we have
    \begin{equation}\label{EpsilonTraza}
    \varepsilon_x^N(\tr(\rho_N(r))) = [C_G(x):C_N(x)] \varepsilon_x^G(r),
    \end{equation}
where $\tr(U)$ stands for the trace of a matrix $U\in M_k(RN)$. If $N$ is not commutative and $U,V\in M_k(RN)$ then $\tr(UV)$ and $\tr(VU)$ might be different. However, $\tr(UV)-\tr(VU)\in [RN,RN]$, where $[RN,RN]$ is the $R$ linear span of the Lie products $[a,b]=ab-ba$, with $a,b\in N$. Hence $\varepsilon_x^N(\tr(UV))=\varepsilon_x^N(\tr(VU))$ for every $x\in N$. In particular, if $U$ and $V$ are conjugate in $M_k(RN)$, then $\varepsilon_x^N(\tr(U))=\varepsilon_x^N(\tr(V))$.

A useful technique to deal with Zassenhaus conjecture (in the most general form) is the so called double action formalism introduced by Weiss \cite{WeissAnnals}. Let $G$ and $H$ be groups, $R$ a commutative ring and let $\alpha:H\rightarrow \GL_k(RG)$ be a group homomorphism. Then $M^{\alpha}$ denotes the right $R(H\times G)$ module $(RG)^k$ with multiplication defined by $x\cdot r(h,g)=\alpha(h)\inv x rg$ for $x\in (RG)^k$, $g\in G$, $h\in H$ and $r\in R$. It is easy to see that if $\beta:H\rightarrow \GL_k(RG)$ is another group homomorphism then $M^{\alpha}\cong M^{\beta}$ if and only if $\alpha$ and $\beta$ are conjugate in $\GL_k(RG)$, i.e. if and only if there exist $u\in \GL_k(RG)$ such that $\beta(h)=\alpha(h)^u$ for every $h\in H$. For example, if $C_m=\GEN{c}$, the cyclic group of order $m$ generated by $c$, and $u$ and $v$ are torsion unit of order $m$ in $\GL_k(RG)$ then $u$ and $v$ are conjugate in $\GL_k(RG)$ if and only if the $R(C_m\times G)$ modules $M^{\alpha_u}$ and $M^{\alpha_v}$ are isomorphic, where $\alpha_u$ and $\alpha_v$ are the homomorphisms $C_m\rightarrow \GL_k(RG)$ mapping $c$ to $u$ and $v$ respectively.

Let $m$ be a positive integer, set $\Gamma=C_m\times G$ and let $G[m]$ denote a set of representatives of $G$-conjugacy classes of elements $g\in G$ with $g^m=1$. For every $g\in G$ let $[g]=\GEN{(c,g)}\le \Gamma$. For every prime integer $p$ and $g\in G$ let $G_{g,p}[m]=\{h \in G[m] : g_p \text{ and } h_p \text{ are conjugate in } G\}$. For $H$ a subgroup of a group $K$ we let $\ind_H^K 1$ denote the character of $K$ induced from the trivial character of $H$.

\begin{lemma}\label{character}
Let $G$ be a finite nilpotent group, $u\in \SGL_k(\Z G)$ with $u^m=1$ and let $\chi$ denote the character of $M^{\alpha_u}$. Then
\begin{enumerate}
\item For every $g\in G[m]$ we have $\chi(c,g)\in \lvert C_G(g) \rvert \Z$ and $\chi = \sum_{g\in G[m]} \frac{\chi(c,g)}{\lvert C_G(g) \rvert} \ind_{[g]}^{\Gamma} 1$.
\item For every prime $p$ and every $g\in G[m]$, $\sum_{h \in G_{g,p}[m]} \frac{\chi(c,h)}{\lvert C_G(h)\rvert} \ind_{[h_{p'}]}^{\Gamma_{p'}} 1$ is a proper character of $\Gamma_{p'}$.
\end{enumerate}
\end{lemma}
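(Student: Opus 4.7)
Plan. I split the proof into (i) the divisibility in (1), (ii) the character identity in (1), and (iii) the claim in (2).

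Step (i) is a direct trace computation. In the $\Z$-basis $\{e_j h : 1\le j\le k,\, h\in G\}$ of $M^{\alpha_u}=(\Z G)^k$, the action of $(c,g)$ sends $e_j h$ to $u^{-1}e_j(hg)$, and summing diagonal contributions yields $\chi(c,g)=|C_G(g)|\cdot\varepsilon^G_{g^{-1}}\!\bigl(\tr(u^{-1})\bigr)$, the same manipulation that gives (1.1); since $\tr(u^{-1})\in\Z G$, the partial augmentation is an integer, so $n_g:=\chi(c,g)/|C_G(g)|\in\Z$.

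For step (ii) I argue $p$-locally. Since $G$ is nilpotent, $\Gamma=\Gamma_p\times\Gamma_{p'}$ with $\Gamma_p=(C_m)_p\times G_p$, and the semisimple algebra $\Z_p G_{p'}$ decomposes via its primitive idempotents giving $\Z_p G\cong\prod_\chi R_\chi G_p$. This factorization splits the $p$-part $u_p$ into pieces $u_p^{(\chi)}\in\SGL_k(R_\chi G_p)$ of $p$-power order, to which Theorem~\ref{TCW} applies (since the $p$-group $G_p$ has at most one non-cyclic Sylow), yielding that each $u_p^{(\chi)}$ is $\GL_k(K_\chi G_p)$-conjugate to a diagonal matrix with entries in $G_p$. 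Hence $V_p:=M^{\alpha_u}\otimes\Z_p$ satisfies that $V_p\otimes\Q_p$ is a permutation $\Q_p\Gamma_p$-module, and Weiss's permutation lattice theorem lifts this to a $\Z_p\Gamma_p$-permutation decomposition $V_p\cong\bigoplus_{h_p\in G_p[m_p]}\Z_p[\Gamma_p/\langle(c_p,h_p)\rangle]^{a_{h_p}}$, so $\chi|_{\Gamma_p}=\sum_{h_p}a_{h_p}\ind_{\langle(c_p,h_p)\rangle}^{\Gamma_p}1$. Since $\Gamma=\prod_p\Gamma_p$, combining these $p$-local identities for all primes and matching multiplicities against the values $\chi(c,g)$ from step (i) delivers the global identity $\chi=\sum_{g\in G[m]}n_g\ind_{[g]}^{\Gamma}1$.

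For step (iii), consider the $\Z_p\Gamma_{p'}$-lattice $V_p^{G_p}$, i.e. the $G_p$-fixed subspace of $V_p$. Averaging the formula of step (ii) over $G_p$ and using the factorization $G=G_p\times G_{p'}$ to separate centralizers, one checks that $\chi_{V_p^{G_p}}=\sum_{g\in G[m]}n_g\ind_{[g_{p'}]}^{\Gamma_{p'}}1$ as class functions on $\Gamma_{p'}$. The $\Z_p\Gamma_p$-permutation decomposition of $V_p$ groups into isotypic parts $V_p=\bigoplus_{[g_p]}V_p^{(g_p)}$ indexed by the $G_p$-conjugacy classes of $p$-elements appearing as stabilizers, and each $V_p^{(g_p)}$ is $\Gamma_{p'}$-stable since $\Gamma_{p'}$ acts by $\Z_p\Gamma_p$-endomorphisms and hence preserves the $\Z_p\Gamma_p$-isomorphism classes of the summands. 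Taking $G_p$-fixed points and reading off the $\Gamma_{p'}$-character term-by-term gives $\chi_{(V_p^{(g_p)})^{G_p}}=\sum_{h\in G_{g,p}[m]}n_h\ind_{[h_{p'}]}^{\Gamma_{p'}}1$, which exhibits the claimed sum as the character of a genuine $\Z_p\Gamma_{p'}$-lattice, hence a proper character of $\Gamma_{p'}$.

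The main obstacle is the application of Weiss's permutation lattice theorem in step (ii): one must establish that $V_p\otimes\Q_p$ is permutation over $\Q_p\Gamma_p$, which is where Theorem~\ref{TCW} enters via the decomposition $\Z_p G\cong\prod_\chi R_\chi G_p$, and then lift the structure integrally. The careful reassembly of the $p$-local character identities into the global formula and the verification that the isotypic decomposition in step (iii) is $\Gamma_{p'}$-stable are the remaining delicate points.
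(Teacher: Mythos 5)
Your Step (i) is a correct and self-contained computation; it is essentially the content of Weiss's Lemma~1 which the paper uses elsewhere for the same purpose, and it does give the divisibility in~(1). However, Steps~(ii) and~(iii) depart completely from the paper's proof and attempt to re-derive the Cliff--Weiss decomposition theorem from scratch, and in doing so they have genuine gaps.

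The paper's proof is short: it cites Cliff--Weiss \cite{CliffWeiss} for the two nontrivial facts that $\chi = \sum_{g\in G[m]} a_g \ind_{[g]}^{\Gamma} 1$ for unique integers $a_g$, and that $\sum_{h\in G_{g,p}[m]} a_h \ind_{[h_{p'}]}^{\Gamma_{p'}} 1$ is a proper character; the only thing left to do is evaluate $\ind_{[h]}^{\Gamma}1$ at $(c,g)$, obtaining $|C_G(g)|$ when $h\in g^G$ and $0$ otherwise, which identifies $a_g = \chi(c,g)/|C_G(g)|$ and finishes. Both claims you set out to prove are exactly the hard content supplied by the citation.

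The specific gaps in your reconstruction: first, Theorem~\ref{TCW} is stated for $\SGL_k(\Z G)$ and $\GL_k(\Q G)$, so invoking it for $u_p^{(\chi)}\in\SGL_k(R_\chi G_p)$ with $R_\chi$ a $p$-adic ring is not licensed as stated; a local version must be established, and that is the internal content of the Cliff--Weiss proof, not something you can extract from the statement of \ref{TCW}. Second, and more seriously, your reassembly from $p$-local data to the global character identity is asserted, not argued. Knowing for each prime $p$ that $\chi|_{\Gamma_p}$ is a sum of characters $\ind_{\langle(c_p,h_p)\rangle}^{\Gamma_p}1$ does not formally imply that $\chi$ is a sum of characters $\ind_{[g]}^{\Gamma}1$ on the full group $\Gamma=\prod_p\Gamma_p$: the character $\chi$ does not factor as a product over primes, and the $p$-local permutation decompositions for different primes need to be made compatible simultaneously. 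Your Step~(i) only pins down $\chi(c,g)$, not $\chi$ on all of $\Gamma$, so "matching multiplicities" cannot close the gap. Third, in Step~(iii), the assertion that the isotypic pieces $V_p^{(g_p)}$ of a chosen $\Z_p\Gamma_p$-permutation decomposition are stable under $\Gamma_{p'}$ needs proof: Krull--Schmidt gives uniqueness of the multiset of indecomposable summands, not a canonical decomposition, so one must argue (for instance via traces of idempotents in $\End_{\Z_p\Gamma_p}(V_p)$) that the commuting $\Gamma_{p'}$-action can be made to respect the isotypic pieces; you state this as if it were automatic. If you want a self-contained proof along these lines you would essentially be rewriting \cite{CliffWeiss}; for the purposes of this lemma the efficient route is the paper's: cite the decomposition and do the one-line evaluation of the induced character.
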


\begin{proof}
Cliff and Weiss \cite{CliffWeiss} prove that $\chi= \sum_{g\in G[m]} a_g \ind_{[g]}^{\Gamma} 1$ for unique integers $a_g$ and $\sum_{h \in G_{g,p}[m]} a_h \ind_{[h_{p'}]}^{\Gamma_{p'}} 1$ is a proper character of $\Gamma_{p'}$. So we only have to prove that $\chi(c,g)=a_g\lvert C_G(g)\rvert$ and this follows from
    $$(\ind_{[h]}^{\Gamma} 1)(c,g) = \left\{\matriz{{ll} \lvert C_G(g)\rvert, & \text{if } h\in g^G; \\ 0, & \text{otherwise}.}\right.$$
\end{proof}

\section{Torsion units with $D$-augmentation 1}

In this section $G$ is a finite group. The title of this section refers to $D=Z(C_G(A))$ for a cyclic subgroup $A$ of $G$ containing $G'$ (see the introduction). So the aim of this section is to prove (ZC1) for torsion units $u$ with $\omega_D(u)=1$.

We start with a lemma, which seems to be folklore.

\begin{lemma}\label{DivisoresPrimos}
Let $N$ be a nilpotent normal subgroup of $G$ and $u$ a torsion unit of $\Z G$ such that $\omega_N(u)=1$. Then
\begin{enumerate}
\item every prime divisor of the order of $u$ divides the order of $N$ and
\item if the order of $u$ is a power of a prime $p$, then $\omega_{N_p}(u)=1$.
\end{enumerate}
\end{lemma}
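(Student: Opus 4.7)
The plan is to establish (2) first and then deduce (1) as a short corollary. For (2), set $\tilde G = G/N_p$ and $v = \omega_{N_p}(u) \in \Z\tilde G$. The nilpotency of $N$ gives the decomposition $N = N_p \times N_{p'}$, so $\tilde N := N/N_p \cong N_{p'}$ is a normal $p'$-subgroup of $\tilde G$. Moreover, $v$ is a normalized torsion unit whose order divides $|u|$, hence of $p$-power order, and $\omega_{\tilde N}(v) = \omega_N(u) = 1$. It therefore suffices to prove the self-contained claim: if $\tilde N$ is a normal $p'$-subgroup of a finite group $\tilde G$ and $v \in \Z\tilde G$ is a normalized torsion unit of $p$-power order with $\omega_{\tilde N}(v) = 1$, then $v = 1$.

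To prove this claim I would assume $v \neq 1$ and compute the sum $S = \sum_{x \in \tilde N} v_x$ in two different ways. Reading off the coefficient of the identity coset in $\omega_{\tilde N}(v) = 1$ immediately gives $S = 1$. On the other hand, because $\tilde N$ is normal in $\tilde G$, it decomposes as a disjoint union of $\tilde G$-conjugacy classes, so $S = v_1 + \sum_i \varepsilon_{x_i}^{\tilde G}(v)$, where the $x_i$ run over representatives of the non-trivial $\tilde G$-classes contained in $\tilde N$. For each such $x_i$, the order $|x_i|$ divides $|\tilde N|$ and is therefore coprime to $p$, so it cannot divide $|v|$, which is a power of $p$; hence Proposition~\ref{Hertweck}(2)(a) forces $\varepsilon_{x_i}^{\tilde G}(v) = 0$. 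Combining the two expressions for $S$ yields $v_1 = 1$, contradicting the Berman--Higman theorem, which asserts that every non-trivial normalized torsion unit has zero coefficient at the identity.

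For (1), suppose a prime $p$ divides $|u|$. The $p$-part $u_p$ is a power of $u$, so $\omega_N(u_p) = 1$ and $|u_p|$ is a positive power of $p$. Applying (2) to $u_p$ gives $\omega_{N_p}(u_p) = 1$; if $p$ did not divide $|N|$, then $N_p = 1$, the map $\omega_{N_p}$ would be the identity, and we would conclude $u_p = 1$, contradicting $|u_p| \geq p$. The only ingredients beyond elementary bookkeeping are Proposition~\ref{Hertweck}(2)(a) and Berman--Higman; the nilpotency of $N$ is used solely to split $N = N_p \times N_{p'}$ so that $\tilde N$ becomes a normal $p'$-subgroup of $\tilde G$. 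I do not anticipate any serious obstacle beyond correctly setting up this reduction.
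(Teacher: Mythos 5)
Your proposal is correct, but it takes a genuinely different route from the paper. The paper proves (1) first, by induction on the number of primes dividing $\lvert N\rvert$: it passes to $\omega_{N_p}(u)$, applies the induction hypothesis over the quotient, and then invokes \cite[Lemma~7.5]{SehgalBook2} to control the order of $u^n$; part (2) is then deduced from (1) by observing that $\omega_{N_p}(u)$ is simultaneously a $p$-element and has $p$-coprime order, hence is trivial. You instead prove (2) directly by a coefficient-counting argument: after passing to $\tilde G = G/N_p$, you sum the coefficients of $v = \omega_{N_p}(u)$ over the normal $p'$-subgroup $\tilde N = N/N_p$, show that all partial augmentations at nontrivial elements of $\tilde N$ vanish via Proposition~\ref{Hertweck}~(\ref{AugmentationOrder}) (since their orders are $p'$ while $\lvert v\rvert$ is a $p$-power), so the sum collapses to $v_1 = 1$, and then the Berman--Higman theorem forces $v=1$; part (1) follows as a short corollary applied to $u_p$. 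Your approach avoids the induction and the external Sehgal lemma, and is arguably cleaner, but it does import Berman--Higman, which the paper does not explicitly cite; that is a standard and unobjectionable ingredient, so the proof is sound.
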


\begin{proof}
(1) We argue by induction on the number of primes dividing $\lvert N\rvert$. If this number is $0$ then $N=1$ and hence $u=\omega_N(u)=1$. Assume that $\lvert N \rvert$ is divisible by $p$.
By hypothesis $\omega_{N/N_p}(\omega_{N_p}(u)) = \omega_N(u)=1$.
Let $n$ be the order of $\omega_{N_p}(u)$.
By the induction hypothesis every prime divisor of $n$ divides $[N:N_p]$.
Moreover the order of $u^n$ is a power of $p$ by \cite[Lemma~7.5]{SehgalBook2}.
Let $q$ be a prime divisor of $\lvert u \lvert$.
Then $q$ is either $p$ or a divisor of $n$. We conclude that $q$ divides $\lvert N\rvert$.

(2) Assume that the order of $u$ is a power of $p$ and set $u_1=\omega_{N_p}(u)$. Then $u_1$ is a $p$-element of $\Z(G/N_p)$ such that $\omega_{N/N_p}(u_1)=\omega_N(u)=1$. Since $p$ is coprime with $[N:N_p]$, the order of $u_1$ is coprime with $p$, by (1). Hence $u_1=1$, as desired.
\end{proof}

The following lemma extends \cite[Claim~5.2]{HertweckEdinb} where it was proved for the case where $G=AX$ with $A$ a cyclic normal subgroup of $G$ and $X$ an abelian subgroup of $G$.

\begin{lemma}\label{pComplemento}
Let $A$ be a cyclic subgroup of $G$ containing $G'$ and let $N$ be a non-trivial $p$-subgroup of $A$ for some prime $p$. Then $\Cen_G(N)$ has a normal $p$-complement (i.e. it has a normal $p'$-Hall subgroup).
\end{lemma}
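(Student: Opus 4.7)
The plan is to construct a normal $p$-complement of $H := \Cen_G(N)$ in two layers by interposing the group $L := \Cen_G(A_p)$. Since $A$ is cyclic, write $A = A_p \times A_{p'}$; both factors are characteristic in $A$, hence normal in $G$, and the hypothesis that $N$ is a $p$-subgroup of the cyclic group $A$ forces $N \le A_p$. Consequently $L \le \Cen_G(N) = H$, and $L$ is normal in $G$ (so in $H$). I will show (i) $H/L$ is a $p$-group and (ii) $L$ has a normal $p$-complement $K$; then $K$ is characteristic in $L$ and hence normal in $H$, and $|H/K| = |H/L| \cdot |L/K|$ is a $p$-power, so $K$ is the desired normal $p$-complement.

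For (i), the conjugation action of $G$ on $A_p$ gives an embedding $G/L \hookrightarrow \Aut(A_p)$, and the image of $H/L$ lies in the pointwise stabilizer of $N$. Writing $|A_p| = p^k$ and $|N| = p^j$ with $j \ge 1$, this stabilizer is the kernel of the reduction map $(\Z/p^k\Z)^\times \to (\Z/p^j\Z)^\times$. For $p$ odd this kernel has order $p^{k-j}$; for $p = 2$ the whole group $(\Z/2^k\Z)^\times$ already has $2$-power order. In either case the stabilizer is a $p$-group, hence so is $H/L$.

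For (ii), observe that $A$ is abelian (so $A \le L$) and $G' \le A$ (so $L' \le A$), while $A_p \le Z(L)$ by definition of $L$. In $\bar L := L/A_{p'}$ the image of $A_p$ is a central cyclic $p$-subgroup containing $\bar L'$, so $\bar L$ is nilpotent (of class at most $2$) and therefore has a normal $p$-complement $\bar K$. Its preimage $K$ in $L$ satisfies $K/A_{p'} = \bar K$, and both $\bar K$ and $A_{p'}$ have $p'$-order, so $K$ is a $p'$-group of $p$-power index in $L$, i.e.\ a normal $p$-complement of $L$.

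The main obstacle is the automorphism-group computation underlying (i), which really uses that $A_p$ is cyclic and $N \le A_p$ is non-trivial and requires a small case split at $p = 2$; once that is in place the rest is a clean consequence of $G' \le A$ together with standard facts about nilpotent groups and lifting normal $p$-complements through a $p'$-quotient.
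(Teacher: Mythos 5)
Your proof is correct and takes a genuinely different route from the paper's, although both hinge on the same automorphism-group observation. The paper reduces at once to the case $N\le Z(G)$, then argues element-wise: since the kernel of the restriction $\Aut(A_p)\to\Aut(N_1)$ (with $N_1$ the minimal subgroup of $N$) is a $p$-group, $p'$-elements of $G$ centralize $A_p$; it then picks a $p'$-Hall subgroup $H$ (its existence uses Hall's theorem, $G$ being metabelian hence solvable), writes $h^g=a_p(a_{p'}h)$ for $h\in H$, $a\in A$, and deduces $a_p=1$ by comparing orders, so $H\trianglelefteq G$. You instead build the $p$-complement structurally through the chain $K\trianglelefteq L=C_G(A_p)\trianglelefteq H=C_G(N)$: the quotient $H/L$ is a $p$-group for the same $\Aut(A_p)$ reason, and $L$ has a normal $p$-complement because $L/A_{p'}$ is nilpotent (central cyclic $p$-subgroup $\overline{A_p}$ contains $\overline{L'}$), so the standard normal $p$-complement of a nilpotent group lifts through the $p'$-quotient $A_{p'}$. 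Your version avoids invoking Hall's theorem and the explicit commutator manipulation, trading them for the textbook facts that nilpotent groups have normal $p$-complements and that these lift through $p'$-quotients; the paper's version is shorter to state but more computational, and its use of $N_1$ (rather than $N$) treats $p=2$ and $p$ odd uniformly where you need the small case split you flag. Both are clean; yours arguably makes the role of $C_G(A_p)$ as the ``easy'' layer more transparent.
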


\begin{proof}
By replacing $G$ by $C_G(N)$ we may assume without loss of
generality that $N$ is central in $G$. If $N_1$ is the unique
minimal non-trivial subgroup of $N$ and $\alpha:\Aut(A_p)\rightarrow
\Aut(N_1)$ is the restriction map, then the kernel of $\alpha$ is a
$p$-group. Therefore the $p'$-elements of $G$ commute with the
$p$-elements of $A$. Let $H$ be a $p'$-Hall subgroup of $G$. We will
prove that $H$ is normal in $G$. Let $h\in H$ and $g\in G$. As $G/A$
is abelian, $h^g=ah$ for some $a\in A$. By Hall Theorem \cite[9.1.7]{RobinsonBook}, it easily
follows, that $A_{p'}\subseteq H$. Therefore
$a_{p'}h\in H$ and in particular the order of  $a_{p'}h$ is coprime
with $p$. Thus $(a_p,a_{p'}h)=1$ and hence the order of $h^g$ is
divisible by the order of $a_p$. Thus $a_p=1$ and we conclude that
$h^g = a_{p'}h\in H$.
\end{proof}

The argument of the following lemma was already used in \cite{delRioSehgal} and \cite{HertweckEdinb} to prove that partial augmentations of elements in $G\setminus C_G(A)$ are non-negative in the minimal counterexamples. We need this also for elements in $G\setminus Z(C_G(A))$.

\begin{lemma}\label{ParcialFueraCentroCentralizador}
Let $A$ be a cyclic subgroup of $G$ containing $G'$ and assume that $G/N$ satisfies (ZC1) for every non-trivial subgroup $N$ of $A$. Let $u$ be a normalized torsion unit in $\Z G$ and let $x\in G\setminus(Z(C_G(A)))$. Then $\varepsilon_x(u)\ge 0$.
\end{lemma}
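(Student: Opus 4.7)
The plan is to exploit the assumption that $(ZC1)$ holds for $G/N$ for every non-trivial subgroup $N$ of $A$. By Proposition~\ref{ZC1PAugmentation}, for any such $N$ every partial augmentation of $\omega_N(u)$ lies in $\{0,1\}$. If one can find a non-trivial $N\le A$ such that the preimage in $G$ of the $G/N$-conjugacy class of $\ov{x}$ is exactly $x^G$, then
$$\varepsilon_{\ov{x}}^{G/N}(\omega_N(u)) \;=\; \varepsilon_x^G(u),$$
and the conclusion $\varepsilon_x^G(u)\ge 0$ follows immediately. Any such $N$ is automatically normal in $G$, since it is characteristic in the cyclic group $A$, which itself is normal in $G$ because $G'\subseteq A$.

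The construction of $N$ rests on the following observation. The assignment
$$\Psi\colon C_G(A)\longrightarrow A,\qquad g\mapsto (x,g),$$
is well-defined because $(x,g)\in G'\subseteq A$, and it is a group homomorphism: the standard commutator identity $(x,g_1g_2)=(x,g_2)\,(x,g_1)^{g_2}$, combined with the facts that $g_2\in C_G(A)$ centralises $(x,g_1)\in A$ and that $A$ is abelian, gives $\Psi(g_1g_2)=\Psi(g_1)\Psi(g_2)$. The hypothesis $x\notin Z(C_G(A))$ is exactly the statement that there exists $g\in C_G(A)$ with $(x,g)\ne 1$, i.e.\ that $\Psi$ is non-trivial; accordingly $N:=\Psi(C_G(A))$ is a non-trivial subgroup of $A$.

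It then remains to verify that this $N$ has the required property. From the identity $x\cdot(x,g)=x^g$ one deduces $xN\subseteq x^G$, and the normality of $N$ in $G$ upgrades this to $x^G\cdot N=x^G$. Consequently $\{z\in G:\ov{z}\in \ov{x}^{G/N}\}=x^G$, and so $\varepsilon_{\ov{x}}^{G/N}(\omega_N(u))=\sum_{z\in x^G} u_z=\varepsilon_x^G(u)$, as desired.

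The main difficulty is identifying the correct homomorphism $\Psi$, which depends on a subtle interplay of three ingredients: the inclusion $G'\subseteq A$ (guaranteeing the codomain lies in $A$), the restriction of the domain to $C_G(A)$ (ensuring $g_2$ centralises the commutator $(x,g_1)$), and the cyclicity of $A$ (providing both that $A$ is abelian and that every subgroup $N\le A$ is normal in $G$). The conceptual point, compared with the analogous argument in \cite{HertweckEdinb} for $x\notin C_G(A)$, is recognising that $x\notin Z(C_G(A))$ is exactly the hypothesis needed to force $\Psi$ to be non-trivial, thereby unifying the two cases $x\notin C_G(A)$ and $x\in C_G(A)\setminus Z(C_G(A))$ in a single argument.
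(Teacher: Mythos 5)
Your proof is correct, and it is essentially the paper's argument, but tidily unified. The paper splits into two cases -- $x\notin C_G(A)$ (where it takes $N=\langle(A,x)\rangle$ and invokes an external lemma from \cite{delRioSehgal} for $Nx^G=x^G$) and $x\in C_G(A)\setminus Z(C_G(A))$ (where it takes $N=\langle(x,c)\rangle$ for a single $c$ and verifies $Nx^G=x^G$ by a short commutator computation using $x\in C_G(A)$). You instead observe that $g\mapsto(x,g)$ is a homomorphism $C_G(A)\to A$ in \emph{all} cases, that $x\notin Z(C_G(A))$ is precisely the statement that this homomorphism is nontrivial (because $A\subseteq C_G(A)$ already covers the $x\notin C_G(A)$ possibility), and that $xN\subseteq x^G$ together with the normality of $N$ gives $x^GN=x^G$ directly. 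This replaces the paper's citation in case one with a self-contained computation and removes the case distinction altogether; the paper's case-two computation is essentially the $\langle c\rangle$-restriction of your homomorphism argument. Both routes reduce to (ZC1) for $G/N$ via Proposition~\ref{ZC1PAugmentation} and the equality $\varepsilon^G_x(u)=\varepsilon^{G/N}_{xN}(\omega_N(u))$.
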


\begin{proof}
Let $C=C_G(A)$ and $D=Z(C)$. If $x\not\in C$, then $N=\GEN{(A,x)}$ is a non-trivial subgroup of $G$ contained in $A$. By hypothesis, (ZC1) holds for $G/N$. Hence $\varepsilon^{G/N}_{xN}(\omega_N(u))\ge 0$, by Proposition~\ref{ZC1PAugmentation}. By \cite[Lemma~2]{delRioSehgal}, $Nx^G=x^G$ and this implies that $\varepsilon^G_x(u)=\varepsilon^{G/N}_{xN}(\omega_N(u))\ge 0$, as desired.

Assume that $x\in C\setminus D$. Then
$(x,c)\ne 1$ for some $c\in C$. Let $N=\langle(x,c)\rangle$, a
non-trivial normal subgroup of $G$. We claim that $x^G = Nx^G$.
Indeed, as $G'\subseteq A$ and $(A,C)=1$, if $w,v\in C$ and $g\in G$,
then $(wv,g)=(w,g)^v (v,g)=(w,g)(v,g)$. Thus, if $n\in N$, then
$n=(x,c)^m=(x,c^m)$ for some integer $m$. If $g\in G$, then $nx^g = (x,c^m)
x^g = (x,c^m) x (x,g) = x (x,gc^m) =x^{gc^m}$. This proves the
claim.

We set $\bar{\alpha}=\omega_N(\alpha)$ for every $\alpha\in \Z G$.
By hypothesis (ZC1) holds for $G/N$ and hence $\varepsilon_{\bar{x}}^{G/N}(\bar{u})\ge 0$. If $u=\sum_{g\in G} u_g g$ with $u_g\in \Z$ for each $g$ then $\varepsilon_x^G(u)=\sum_{g\in x^G} u_g = \sum_{g\in Nx^G} u_g = \varepsilon_{\bar{x}}^{G/N}(\bar{u})\ge 0$, as desired.
\end{proof}

The following lemma comes from a closer investigation of the proof of Theorem \ref{TCW} as given \cite{CliffWeiss}.

\begin{lemma}\label{Leo}
Let $N$ be an abelian normal subgroup of $G$ and $u$ a torsion unit in $\Z G$ with $\omega_N(u)=1.$ Let $\eta$ be an irreducible character of $N$ and $n\in N$. Then $$\sum_{h\in \ker \eta} [C_G(hn):N]\varepsilon^G_{hn}(u)\ge 0.$$
\end{lemma}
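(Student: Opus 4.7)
The plan is to reduce the inequality to the matrix form of Zassenhaus (Theorem~\ref{TCW}) applied over the cyclic quotient $N/\ker\eta$. Let $K = \ker \eta$, $k = [G:N]$, and consider $\rho_N(u) \in \SGL_k(\Z N)$, which is a torsion unit since $\omega_N(u)=1$.

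First, I would rewrite the target quantity as the coefficient of a group element in a trace. By \eqref{EpsilonTraza} together with $C_N(hn)=N$ (as $N$ is abelian), each summand equals $\varepsilon_{hn}^N(\tr(\rho_N(u)))$; since $N$ is abelian, these partial augmentations are simply the coefficients of $hn$ in $\tr(\rho_N(u)) \in \Z N$. Consequently,
$$\sum_{h\in \ker\eta}[C_G(hn):N]\,\varepsilon_{hn}^G(u) \;=\; [\bar n]\bigl(\omega_K(\tr(\rho_N(u)))\bigr),$$
where $[\bar n]$ denotes the coefficient of the coset $\bar n = nK$ in an element of $\Z(N/K)$.

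Next, I would pull $\omega_K$ inside the trace (legitimate since $\omega_K$ is a ring homomorphism, applied entry-wise) and observe that $\omega_K(\rho_N(u))$ is a torsion unit in $\SGL_k(\Z(N/K))$, because the usual augmentation on $\Z N$ factors through $\omega_K$. The key observation is that since $\eta$ is an irreducible character of the abelian group $N$, $\eta$ is linear and $N/K$ embeds into $\C^{\times}$; in particular $N/K$ is cyclic. Theorem~\ref{TCW} then provides $P \in \GL_k(\Q(N/K))$ conjugating $\omega_K(\rho_N(u))$ to a diagonal matrix $\diag(\bar n_1,\dots,\bar n_k)$ with $\bar n_i \in N/K$. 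Since $\Q(N/K)$ is commutative, the trace is invariant under this conjugation, so $\tr(\omega_K(\rho_N(u))) = \sum_i \bar n_i$ is an element of $\Z(N/K)$ whose coefficients on the group basis are non-negative integers. In particular, the coefficient of $\bar n$ is $\ge 0$, which is exactly the claim.

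The essential point is the passage from the possibly non-cyclic abelian $N$ to the cyclic quotient $N/\ker\eta$, which is what makes Theorem~\ref{TCW} available in the first place; beyond this reduction and the careful bookkeeping of the trace identity $\omega_K(\tr(\rho_N(u))) = \tr(\omega_K(\rho_N(u)))$, no serious obstacle is anticipated.
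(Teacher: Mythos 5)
Your proof is correct, and it takes a genuinely different and cleaner route than the paper's. The paper works with $\rho_N(u)\in\SGL_k(\Z N)$ over $N$ itself, which need not be cyclic; it therefore cannot apply Theorem~\ref{TCW} directly, and instead invokes Lemma~\ref{character}, which repackages the Cliff--Weiss character decomposition $\chi=\sum_{g}a_g\,\ind_{[g]}^{\Gamma}1$ together with the prime-wise positivity statement, pairs against a suitable linear character of $\Gamma_{p'}$ extending $\eta$, and then sums the resulting $p'$-inequalities over $N_p\cap\ker\eta$. You instead push the whole problem through $\omega_K$ to the cyclic group $N/K$ (cyclic because $\eta$ is linear), so that Theorem~\ref{TCW} applies in its strongest form and actually diagonalises $\omega_K(\rho_N(u))$ over $\Q(N/K)$ with group-element entries; the inequality then falls out immediately because the trace of a diagonal matrix with group entries has non-negative coefficients, and the identification with the target sum follows from equation~\eqref{EpsilonTraza} combined with $\omega_K(\tr(\rho_N(u)))=\tr(\omega_K(\rho_N(u)))$. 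Both proofs ultimately rest on the Cliff--Weiss machinery, but you use the final diagonalisability result while the paper uses the intermediate character decomposition; your route avoids the $p$-versus-$p'$ bookkeeping entirely, and buys a shorter, more self-contained argument at the modest cost of being slightly less flexible should one later want the finer prime-wise information.
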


\begin{proof}
%
Let $m$ be the order of $u$ and let $v$ be the image of $u$ under the natural permutation homomorphism $\mathbb{Z}G \rightarrow GL_k(\mathbb{Z}N)$.
Then $v \in SGL_k(\mathbb{Z}N)$, where $k = [G:N]$. Let $\Gamma=C_m\times N$, with $C_m=\GEN{c}$, a cyclic group of order $m$, let $\chi$ be the character of $M^{\alpha_v}$ and let $N[m]=\{n\in N : n^m=1\}$.
%
%
%
By \cite[Lemma 1]{WeissCrelle} we know $\lvert C_G(n) \rvert \varepsilon_n^G(u)= \chi(c,n)$ for every $n\in N$. Moreover, $\varepsilon_h^G(u)=0$ if $h\in N\setminus N[m]$, by statement~(\ref{AugmentationOrder}) of Proposition~\ref{Hertweck}.
Combining this with Lemma~\ref{character} we have
    \begin{eqnarray*}
    \chi &=& \sum\limits_{h \in N[m]} \frac{\chi(c,h)}{\lvert C_N(h) \rvert} {\rm ind}_{[h]}^\Gamma 1 =
    \sum\limits_{h \in N[m]} [C_G(h):N] \varepsilon_h^G(u) {\rm ind}_{[h]}^\Gamma 1 \\
    &=& \sum\limits_{h \in N} [C_G(h):N] \varepsilon_h^G(u) {\rm ind}_{[h]}^\Gamma 1
    \end{eqnarray*}
and for every prime integer $p$ and every character $\psi$ of $\Gamma_{p'}$ we have
    \begin{eqnarray*}
    0 &\leq &
    \left\langle \sum\limits_{h \in N_{n,p}[m]} [C_G(h):N] \varepsilon_h^G(u) {\rm ind}_{[h_{p'}]}^{\Gamma_{p'}} 1, \psi \right\rangle _{\Gamma_{p'}} \\
    &=&
    \sum\limits_{h \in N_{n,p}[m]} [C_G(h):N] \varepsilon_h^G(u) \langle {\rm ind}_{[h_{p'}]}^{\Gamma_{p'}} 1, \psi \rangle _{\Gamma_{p'}} \\
    &=& \sum\limits_{h \in N,h_p=n_p,\psi(c_{p'},h_{p'})=1} [C_G(h):N] \varepsilon_h^G(u)
    = \sum\limits_{h \in N_{p'}, \psi(1,h)=1} [C_G(hh'):N] \varepsilon_{hh'}^G(u),
    \end{eqnarray*}
where $h'$ is a fixed element of $N$ with $h'_p=n_p$ and $\psi(c,h')=1$.

Let $\psi$ be the character of $\Gamma$ given by $\psi|_N=\eta$ and $\psi(u)=\eta(n)\inv$. Then $\psi(c,n)=1$ and therefore applying the previous inequality for $h'=h_pn$ with $h_p\in N_p$ we deduce that
    $$\sum\limits_{h \in N_{p'}\cap \ker \eta} [C_G(hh_pn):N] \varepsilon_{hh_pn}^G(u)\ge 0.$$
We conclude that
    $$\sum\limits_{h\in \ker\eta} [C_G(hn):N] \varepsilon_{hn}^G(u) = \sum\limits_{h_p\in N_p\cap \ker\eta} \sum_{h_{p'}\in N_{p'}\cap \ker\eta} [C_G(h_{p'}h_pn):N] \varepsilon_{h_{p'}h_pn}^G(u) \ge 0,$$
as desired.
\end{proof}

The following theorem is an adjustment of \cite[Theorem~5.1]{HertweckEdinb} to our situation.\\
\begin{theorem}\label{D_pConjugation}
Let $G$ be a finite group and $A$ a cyclic normal subgroup of $G$ containing $G'$. Set $D = Z(C_G(A))$ and let $u$ be a torsion unit of $\Z G$ with $\omega_D(u)=1$. If the order of $u$ is a power of a prime $p$, then $u$ is conjugate in $\mathbb{Z}_pG$ to an element of $D_p$.
\end{theorem}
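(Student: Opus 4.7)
My strategy adapts Hertweck's proof of the cyclic case \cite[Theorem~5.1]{HertweckEdinb}, where the role of $D$ is played by the cyclic subgroup $A$ itself. As an initial reduction I note that $D$ is abelian (in particular nilpotent) and $u$ has $p$-power order, so Lemma~\ref{DivisoresPrimos}(2) applied with $N=D$ gives $\omega_{D_p}(u)=1$. Moreover $D$ is normal in $G$: since $A\triangleleft G$ we have $C_G(A)\triangleleft G$, and $D=Z(C_G(A))$ is characteristic in $C_G(A)$. Thus $D_p$ is a normal $p$-subgroup of $G$, and Proposition~\ref{Hertweck}(1)(\ref{pAugmentation}) applied with $R=\Z_p$ and $P=D_p$ yields an $x\in D_p$ to which $u$ is conjugate in $\Q_p G$. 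The essential task is then to upgrade this $\Q_p G$-conjugacy to a $\Z_p G$-conjugacy. In Hertweck's setting this upgrade would follow from Proposition~\ref{Hertweck}(2)(\ref{ConjugadopAdico}) combined with Lemma~\ref{pComplemento} applied to $\GEN{x}\subseteq A$; but here $x$ need not lie in $A$ and $D_p$ need not be cyclic, so this direct route is blocked.

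To circumvent this, I would pass to the image $\rho_{D_p}(u)\in\SGL_k(\Z_p D_p)$ with $k=[G:D_p]$. Since $D_p$ is a $p$-group, in particular a nilpotent group with at most one non-cyclic Sylow subgroup, the $p$-adic form of the Cliff--Weiss theorem (Theorem~\ref{TCW}) produces a $w\in\GL_k(\Z_p D_p)$ with $w\rho_{D_p}(u)w\inv=\diag(x_1,\dots,x_k)$ for some $x_i\in D_p$. Formula~(\ref{EpsilonTraza}) applied to $\tr(\diag(x_1,\dots,x_k))=\sum_i x_i$ then yields
$$
[C_G(y):D_p]\,\varepsilon_y^G(u)=\#\{i \mid x_i=y\} \quad\text{for every } y\in D_p,
$$
so all partial augmentations of $u$ on $D_p$ are non-negative integers, and by Proposition~\ref{ZC1PAugmentation} applied in the $\Q_p G$-picture all the $x_i$ lie in a single $G$-conjugacy class, namely that of $x$.

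To finish, I would compare the $\Z_p\Gamma$-modules $M^{\alpha_u}$ and $M^{\alpha_x}$, where $\Gamma=C_m\times G$ and $m=|u|$; an isomorphism between them is equivalent to the $\Z_p G$-conjugacy sought. By Lemma~\ref{character} these modules are characterised by a family of character identities that, in the cyclic case, can be tested against a single faithful linear character of $A$; here the natural substitute is the family of linear characters $\eta$ of the abelian group $D$ with $\ker\eta\cap A=1$, which exist because $A$ is cyclic and $A\subseteq D$. Combining the resulting identities with Lemma~\ref{Leo} (to control the signs of the partial augmentations restricted to each $\ker\eta$) and Weiss's permutation module theorem \cite{WeissAnnals} should force $M^{\alpha_u}\cong M^{\alpha_x}$ as $\Z_p\Gamma$-modules, and hence the conjugacy of $u$ and $x$ in $\Z_p G$. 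The main obstacle, and the reason a careful revision of Hertweck's and Cliff--Weiss's proofs is required, is exactly that no single $\eta$ is faithful on $D$: one must show that the full family of $\eta$ with $\ker\eta\cap A=1$ still supplies enough constraints to pin down $u$ over $\Z_p$ rather than merely over $\Q_p$.
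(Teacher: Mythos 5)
Your reductions are fine and in fact sharpen what the paper leaves implicit: noting that $D$ is an abelian normal subgroup and applying Lemma~\ref{DivisoresPrimos}(2) to get $\omega_{D_p}(u)=1$, hence $D_p$ is a normal $p$-subgroup to which Proposition~\ref{Hertweck}(\ref{pAugmentation}) applies, is exactly right, and you correctly locate the difficulty in upgrading the resulting $\Q_p G$-conjugacy of $u$ to $x\in D_p$ to a $\Z_p G$-conjugacy. You also correctly identify the key structural substitute for a faithful linear character of a cyclic group, namely linear characters of $D$ whose kernel meets $A$ trivially. However, your proposed route does not close the gap.

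The central problem is that diagonalising $\rho_{D_p}(u)$ in $\GL_k(\Z_p D_p)$ is strictly weaker than what is required. The map $\rho_{D_p}\colon\Z_p G\to M_k(\Z_p D_p)$ is not surjective, so conjugacy of $\rho_{D_p}(u)$ and $\rho_{D_p}(x)$ in $\GL_k(\Z_p D_p)$ does \emph{not} imply conjugacy of $u$ and $x$ in $\Z_p G$. In module language, what you obtain (even granting a $p$-adic form of Theorem~\ref{TCW}, which as stated is a $\Q G$-conjugacy result) is an isomorphism of the \emph{restrictions} $M^{\alpha_u}|_{C_m\times D_p}\cong M^{\alpha_x}|_{C_m\times D_p}$, not of $M^{\alpha_u}$ and $M^{\alpha_x}$ as $\Z_p(C_m\times G)$-modules. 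Your final paragraph gestures at bridging this with Lemma~\ref{character}, Lemma~\ref{Leo} and Weiss's theorem, but none of these apply in the needed form: Lemma~\ref{character} is stated for a nilpotent $G$, which ours is not; Lemma~\ref{Leo} only yields inequalities for partial augmentations over $\Z$ (and is used in the paper for Corollary~\ref{OmegaD}, not here); and ``should force'' is not an argument. The paper instead tackles the passage from $D_p$ to $G$ head-on: it invokes Lemma~\ref{pComplemento} to produce the normal $p$-complement $Q$ of $L=C_G(D_p)$, decomposes $RG$ along $G$-orbit sums of primitive central idempotents of $KQ$, uses Hertweck's \cite[Lemma~4.6]{HertweckEdinb} to replace $\epsilon u$ by an explicit ``diagonal'' unit $v=\sum_{i,j}e_i^{s_j^{-1}}x^{g_{ij}}$ conjugate to it over $R$, and then proves $v$ and $\epsilon x$ are conjugate in $\epsilon RG$ by a counting argument on eigenvalue multiplicities of an explicit representation $\hat\Delta$ built from a linear character $\pi$ of $D_p$ with kernel $H$ satisfying $H\cap A=1$ and $D_p/H$ cyclic. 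That entire block-and-count machinery, which is the actual content of the theorem, is absent from your proposal.
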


\begin{proof}
By statement~(\ref{pAugmentation}) of Proposition~\ref{Hertweck}, $u$ is conjugate  in $\mathbb{Q}G$ to an $x \in D_p$.
Let $R$ be a $p$-adic ring with quotient field $K$ containing a root of unity of order the exponent of $G$.
We will proof that $u$ is conjugate to $x$ in $RG$. Then by \cite[30.25]{CurtisReiner} the conjugation already takes place in $\mathbb{Z}_pG.$

Set $L = C_G(D_p), \ E=C_G(x)$ and let $Q$ be the normal $p$-complement of $L$, that exists by Lemma~\ref{pComplemento}. Note that $L$ and $E$ are normal in $G$, for they contain $A$, and $Q$ is also normal in $G$ for it is a characteristic subgroup of $L$.

The primitive central idempotents of $KQ$ belong to $RQ$, because the order of $Q$ is invertible in $R$. Moreover $G$ acts on these primitive central idempotents by conjugation. Let $\epsilon_1,...,\epsilon_{\beta}$ be the sums of the $G$-orbits of this action. Then $RG = \prod\limits_{i=1}^\beta \epsilon_iRG$ and therefore it is enough to show that $\epsilon_iu$ is conjugate to $\epsilon_ix$ in $\epsilon_iRG$ for every $i$. Note that $\epsilon_iu$ is conjugate to $\epsilon_ix$ in $\epsilon_iKG$ and a primitive idempotent of $KQ$ stays primitive in $KL$ by Greens Indecomposability Theorem, since $L/Q$ is a $p$-group \cite[19.23]{CurtisReiner}.

So fix one primitive central idempotent $f$ of $KQ$ and let $\epsilon$ be the sum of the $G$-conjugates of $f$. We have to prove that $\epsilon u$ and $\epsilon x$ are conjugate in $\epsilon RG$.
Let $e$ be the sum of different $L$-conjugates of $f$ and write $e = e_1 + ... + e_m$ with orthogonal primitive idempotents of $RQ$.
Let $T=C_G(e)$ and let $\{1=s_1,...,s_n\}$ be a transversal of $G/T$.
Then
$$\epsilon = \sum\limits_{i=1}^m \sum\limits_{j=1}^n e_i^{s_j^{-1}}$$
is a decomposition into primitive orthogonal idempotents of $\epsilon RL$.
So by \cite[Lemma 4.6]{HertweckEdinb} there exist $g_{ij} \in G$ such that
$$v = \sum\limits_{i=1}^m \sum\limits_{j=1}^n e_i^{s_j^{-1}} x^{g_{ij}}$$
is conjugate to $\epsilon u$ in $RG$. In particular $|v|=|\epsilon u|=|\epsilon x|$. Let $C=\GEN{c}$ a cyclic group with the same order as $v$ and consider the $R(C\times G)$-modules $M=M^{\alpha}$ and $N=M^{\beta}$, with $\alpha=\alpha_v$ and $\beta=\alpha_{\epsilon x}$. (We are abusing the notation by writing $\alpha_v$ and $\alpha_{\epsilon x}$ instead of $\alpha_{v+(1-\epsilon)}$ and $\alpha_{\epsilon x + (1-\epsilon)}$, respectively.) We have to show that $v$ is also conjugate to $\epsilon x$ in $\epsilon RG$ or equivalently that $M$ and $N$ are isomorphic as $R(C\times G)$-modules.

%
%
As $x\in D_p$ and $e_i\in RL$ and both $D_p$ and $L$ are normal in $G$, every $G$-conjugate of $x$ belongs to $D_p$ and every $G$-conjugate of $e_i$ belong to $RL$. On the other hand $(D_p,L)=1$ and therefore every $G$-conjugate of $x$ commutes with every $G$-conjugate of $e_i$.
Using this it easily follows that each $RGe_{i}^{s_j\inv}$ is a submodule of both $M$ and $N$. We set $M_{ij} = Me_i^{s_j^{-1}}$ and $N_{ij}=Ne_i^{s_j^{-1}}$, i.e. both $M_{ij}$ and $N_{ij}$ are $RGe_{i}^{s_j\inv}$, considered as submodules of $M$ and $N$ respectively. The strategy of the proof consists in pairing isomorphic $M_{ij}$'s and $N_{ij}$'s.

Firstly observe that if $g\in G$, then every two primitive idempotents $\varepsilon_1$ and $\varepsilon_2$ of $RQe^g$ are conjugate in $RLe^g$, since $KLe^{g}$ is simple and hence $RL\varepsilon_1\cong RL \varepsilon_2$ (see e.g. Theorem 6.7, Proposition 16.16 and Problem~6.14 in \cite{CurtisReiner}). If $e_i^{s_j\inv}=e_1^w$ with $w$ a unit in $RQe$ then $a\mapsto aw$ is an isomorphism $N_{1j}\rightarrow N_{ij}$. Secondly, if $q\in E$, then the map $a\mapsto aq$ is an isomorphism $Ne_i\rightarrow Ne_i^q$. Therefore, if we choose the transversal $\{ s_j \ : \ 1 \leq j \leq n\} = \{ h_{j_2}q_{j_1} \ | \ 1 \leq j_1 \leq n_1, \ 1 \leq j_2 \leq n_2\}$, with $\{h_1,...,h_{n_2}\}$ a transversal of $G/TE$ and $\{q_1,...,q_{n_1}\}$ a transversal of $E/E\cap T$ (which is also a transversal of $TE/T$), and denote $N_{ij}=N_{i(j_1,j_2)}$ if $s_j=h_{j_2}q_{j_1}$. Then we have
\begin{equation}\label{N}
N = \bigoplus\limits_{i,j}N_{ij} \cong \bigoplus\limits_{j_2=1}^{n_2} \left(N_{1(1,j_2)}\right)^{mn_1}
\end{equation}
If we have $g_{ij}s_j \equiv s_{j_0} \mod T$ then $N_{ij_0}\cong M_{ij}$ via $a \rightarrow ag_{ij}$. So if $g_{ij}s_j \equiv h_{j_2} \mod TE$ we can pick a suitable $q_{j_1}$ such that by setting $s_{j_0} = h_{j_2}q_{j_1}$ we have $g_{ij}s_j \equiv s_{j_0} \mod T$ and this gives $M_{ij} \cong N_{1,(1,j_2)}$.

Set $X_{j_2} = \{(i,j) \ | \ g_{ij}s_j \equiv h_{j_2} \mod TE\}$.
By the previous paragraph, the isomorphism $M\cong N$ will follow from (\ref{N}) provided $\lvert X_{j_2} \rvert = mn_1$. The remainder of the proof is dedicated to prove this equality. For this we will use a representation of $\epsilon KG$ and investigate the multiplicities of eigenvalues of $\epsilon x$ and $v$ under this representation. They are the same for $\epsilon x$ and $v$ are conjugate in $\epsilon KG$. This is also the strategy in the proof of \cite[Theorem~5.1]{HertweckEdinb}. However the representation used by Hertweck was constructed using a faithful linear representation of $A_p$ and here we need to use a linear representation of $D_p$. This representation cannot be faithful if $D_p$ is not cyclic. Instead we use a subgroup $H$ of $G$ such that $G/H$ is cyclic and $H\cap A=1$ and then consider a linear representation of $D_p$ with kernel $H$. The existence of such $H$ follows easily by observing that if $H$ is a maximal subgroup of $D_p$ not intersecting $A$, then $D_p/H$ is cyclic because otherwise $D_p/H$ contains a direct product $\GEN{c}\times \GEN{d}$ of cyclic groups of order $p$. This would yield to a contradiction using the maximality of $H$ and the fact that $A$ is cyclic.

Let $\pi$ be the linear character of a representation of $D_p$ with kernel $H$ and let $\psi$ be the sum of all irreducible characters of $Q$, that do not vanish on $e$. So $\psi(f) = 1$ for every primitive idempotent $f$ of $RQ$ satisfying $ef \neq 0$ and hence $\psi(1)=m$. Let $\rho$ be a representation of $Q \times D_p$ affording $\psi \otimes \pi$. This can be chosen satisfying $\rho(e_i)=E_i$, where $E_i$ denotes the elementary matrix with 1 in the $i$-th diagonal entry and 0 anywhere else. Then $\rho(e_iy)=\pi(y)E_{i}$ for any $i$ and $y \in D_p$. Let $\chi = { \rm ind}_{Q\times D_p}^T (\psi \otimes \pi)$, let $\Delta$ be a representation of $T$ affording $\chi$ and let $\{t_
1,...,t_k\}$ be a transversal of $T/ {Q \times D_p}$. Then, after a suitable conjugation one may assume that
    $$\Delta\left(\sum\limits_{i=1}^m e_i y_i\right) = \diag\left(\pi\left(y_i^{t_j}\right) \ | \ 1 \leq i \leq m, \ 1 \leq j \leq k\right)\in M_{mk}(K)$$
for every $y_1,\dots,y_m \in D_p$
Denote by $\bar{\Delta}: {\rm M}_n(eKT) \rightarrow {\rm M}_{nmk}(K)$ the map which acts like $\Delta$ componentwise.

As right $KG$-modules we have $\epsilon KG = \sum\limits_{j=1}^n e^{s_j^{-1}} KG \cong (eKG)^n$ and so
\begin{equation}\label{delta}
\epsilon KG \cong {\rm End}_{KG}(\epsilon KG) \cong {\rm M}_n({\rm End}_{KG}(eKG)) \cong {\rm M}_n(eKGe).
\end{equation}
Moreover, $eKGe = e \sum\limits_{j=1}^n s_jKTe = e \sum\limits_{j=1}^n e^{s_j^{-1}}s_j^{-1}KT = eKT$, since $e$ is orthogonal to any different conjugate of itself.
So (\ref{delta}) gives an isomorphism $\delta:\epsilon KG \cong {\rm M}_n(eKT)$, which satisfies $\delta(\sum\limits_{j=1}^n e^{s_j^{-1}}y_j) = \diag(ey_1^{s_1},...,ey_n^{s_n})$ for $y_1,\dots,y_k\in KT$.  Set $\hat{\Delta} = \bar{\Delta} \circ \delta$. Then we have
\begin{equation}\label{D(x)}
\hat{\Delta}(\epsilon x) = \bar{\Delta}\left(\diag\left(ex^{s_1},...,ex^{s_n}\right)\right) = \diag\left(\pi\left(x^{s_jt_l}\right) : 1 \leq i \leq m, \ 1 \leq j \leq n, \ 1 \leq l \le k\right). \nonumber
\end{equation}
Observe that the index $i$ only affects the diagonal entries of $\hat{\Delta}(\epsilon x)$ by repeating each entry $m$ times. Furthermore $\{s_jt_l : 1\le j \le n, 1 \leq l \le k\}$ is a transversal of $G/(Q\times D_p)$.
Note that if $x^g \neq x$, then $x^g = ax$ with $1\ne a \in A$ and so $\pi(x^g) = \pi(a)\pi(x) \neq \pi(x)$ by the choice of $\pi$. So, the diagonal of $\hat{\Delta}(\epsilon x)$ is formed by $[G:E]$ different entries each with multiplicity $m[E:Q\times D_p]$.

On the other hand we have
\begin{eqnarray}
\hat{\Delta}(v) &=& \hat{\Delta}\left(\sum\limits_{i=1}^m \sum\limits_{j=1}^n e_i^{s_j^{-1}}x^{g_{ij}}\right) = \bar{\Delta}\left(\diag\left(\sum\limits_{i=1}^m e_ix^{g_{i1}s_1},...,\sum\limits_{i=1}^m e_i x^{g_{in}s_n}\right)\right) \nonumber \\ \nonumber
&=& \diag\left(\pi\left(x^{g_{ij}s_jt_l}\right) \ | \ 1\leq i \leq m, \ 1 \leq j \leq n, \ 1 \leq l \leq k\right)
\end{eqnarray}
Let $\{v_1,\dots,v_r\}$ be a transversal of $T/T\cap E$ and set $Y_{j_2}=\{(i,j,l) : g_{ij}s_jt_l \equiv h_{j_2} \mod E\}$ and $\bar{Y}_{j_2}=\{(i,j,l) : g_{ij}s_jv_l \equiv h_{j_2} \mod E\}$. Arguing as in the previous paragraph we deduce that the multiplicity of $\pi(x^{h_{j_2}})$ in $\hat{\Delta}(v)$ is $\lvert Y_{j_2} \rvert = [T\cap E:Q\times D_p] \lvert \bar{Y}_{j_2} \rvert$.
As $v$ and $\epsilon x$ are conjugate in $\epsilon KG$ we deduce that $m[E:Q\times D_p]=[T\cap E:Q\times D_p] \lvert \bar{Y}_{j_2} \rvert$ and therefore $|\bar{Y}_{j_2}|=m[E:Q\times D_p]/[T\cap E:Q\times D_p] = m [E:T\cap E] = mn_1$. Then $(i,j,l)\mapsto (i,j)$ defines a bijective map $\bar{Y}_{j_2} \rightarrow X_{j_2}$. Therefore $\lvert X_{j_2} \rvert = m n_1$ as desired.
\end{proof}

\begin{corollary}\label{OmegaD}
Let $G$ be a finite group and $A$ a normal cyclic subgroup of $G$ containing $G'$. Let $D=Z(C_G(A))$ and $u$ a torsion unit in $\Z G$ satisfying $\omega_D(u)=1.$ Then $u$ is conjugate in $\Q G$ to an element of $D$.
\end{corollary}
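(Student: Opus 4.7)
The plan is to reduce the statement to the $p$-power-order case already covered by Theorem~\ref{D_pConjugation}, and then glue the resulting local conjugacies into a single rational conjugacy via Proposition~\ref{ZC1PAugmentation}. Since $D = Z(C_G(A))$ is abelian and therefore nilpotent, Lemma~\ref{DivisoresPrimos}(1) shows that every prime $p$ dividing $|u|$ also divides $|D|$. For each such $p$, the $p$-part $u_p$ is a power of $u$ and hence satisfies $\omega_D(u_p) = 1$, so Theorem~\ref{D_pConjugation} yields an element $d_p \in D_p$ with $u_p$ conjugate to $d_p$ in $\Z_p G$ (and hence in $\Q G$). The natural candidate for the rational conjugate of $u$ is $d = \prod_p d_p \in D$.

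By Proposition~\ref{ZC1PAugmentation} it suffices to prove $\varepsilon_g^G(v) \geq 0$ for every $v \in \langle u \rangle$ and every $g \in G$: once this is done, $u$ is conjugate in $\Q G$ to some $y \in G$, and Remark~\ref{ConjugatePartialAugmentation} together with $\omega_D(u) = \omega_D(y) = 1$ forces $y \in D$. Note that every $v \in \langle u \rangle$ inherits both $\omega_D(v) = 1$ and the property that each $v_p$ is conjugate in $\Z_p G$ to an element of $D_p$, so the reductions apply uniformly to all $v$. For $g \notin D$, Lemma~\ref{ParcialFueraCentroCentralizador}---available under the inductive hypothesis that (ZC1) holds for quotients $G/N$ with $1 \neq N \leq A$, which is in force throughout the main proof---gives $\varepsilon_g^G(v) \geq 0$. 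Combining this with $\omega(v) = 1$ and the vanishing of $\omega_D(v)$ on every non-identity coset $gD$ then collapses these inequalities to $\varepsilon_g^G(v) = 0$ for all $g \notin D$, so that $\sum_{g \in D/\sim_G} \varepsilon_g^G(v) = 1$.

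The hard part will be securing nonnegativity for $g \in D$. I would apply Proposition~\ref{Hertweck}(\ref{pPartesNoConjugadas}) prime by prime: since $v_p$ is conjugate in $\Z_p G$ to $d_p \in D_p$, any $g \in G$ whose $p$-part is not $G$-conjugate to $d_p$ for some prime $p$ dividing $|v|$ satisfies $\varepsilon_g^G(v) = 0$. This confines the support of $\varepsilon_\bullet^G(v)$ on $D$ to the set $S = \{g \in D : g_p \sim_G d_p \text{ for all } p \mid |v|\}$, and one then needs to show that the only $G$-conjugacy class in $S$ with nonzero partial augmentation is that of $d$ itself, with $\varepsilon_d^G(v) = 1$. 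The subtle point is that $S$ may in principle contain several $G$-classes whose Sylow components are individually $G$-conjugate to those of $d$ without the full elements being $G$-conjugate to $d$; ruling this out is the principal technical challenge, and I expect it to require exploiting the abelian and normal structure of $D$, the $G$-equivariance of its Sylow decomposition, and the constraint $\sum_{g \in D/\sim_G} \varepsilon_g^G(v) = 1$ already established.
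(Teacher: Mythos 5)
Your reductions are sound and follow the same opening moves as the paper: invoking Theorem~\ref{D_pConjugation} to get local conjugates $d_p\in D_p$, applying Proposition~\ref{Hertweck}(\ref{pPartesNoConjugadas}) to constrain the support of the partial augmentations, using Lemma~\ref{ParcialFueraCentroCentralizador} to get nonnegativity off $D$, and using $\omega_D(v)=1$ together with the fact that $G$-conjugacy classes lie inside cosets of $D$ (since $G'\subseteq A\subseteq D$) to kill all partial augmentations outside $D$ and obtain $\sum_{g^G\subseteq D}\varepsilon_g^G(v)=1$. Up to this point you and the paper are aligned.

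However, your proposal stops exactly where the real content of the corollary lies, and you acknowledge this yourself: ``ruling this out is the principal technical challenge.'' The constraints you have assembled are not sufficient. Confining the support to $S=\{g\in D: g_p\sim_G d_p\ \forall p\}$ does not separate distinct $G$-classes sharing the same conjugacy types of Sylow components, and the single linear relation $\sum\varepsilon_g^G(v)=1$ is compatible with signed solutions such as $\varepsilon_{g_1}=2,\ \varepsilon_{g_2}=-1$. ``Exploiting the abelian structure of $D$'' and ``$G$-equivariance of its Sylow decomposition'' will not close this, because $g_p\sim_G d_p$ for each $p$ does not imply $g\sim_G d$; the conjugating elements need not be chosen coherently across primes.

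The paper's missing ingredient is Lemma~\ref{Leo}. The paper first shows (via the $p$-part argument you also use, but passing through the quotient $G/A$ rather than raw $G$-conjugacy) that $\{n\in D:\varepsilon_n(u)\ne 0\}\subseteq bA$ for a fixed coset $bA$. It then picks a subgroup $H\le D$ with $H\cap A=1$ and $D/H$ cyclic (such $H$ exists by the same argument as in the proof of Theorem~\ref{D_pConjugation}) and a linear character $\eta$ of $D$ with $\ker\eta=H$. Lemma~\ref{Leo}, the Cliff--Weiss-type inequality, gives $0\le\sum_{h\in H}[C_G(hn):D]\,\varepsilon_{hn}(u)$; and because $bA\cap nH=n(A\cap H)=\{n\}$, the sum collapses to the single term $[C_G(n):D]\,\varepsilon_n(u)$, which is strictly negative by the choice of $n$, a contradiction. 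Without this character-theoretic input (or an equivalent replacement) the proof cannot be completed, so your proposal has a genuine gap at its central step.
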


\begin{proof}
Since $G/A$ is abelian there exists some $b\in G$ such that $\omega_A(u)=bA$. We claim that $\{n\in D:\varepsilon_n(u)\neq 0\}$ is contained in $bA$.
Indeed, let $p$ be some prime dividing $|u|$. By Theorem~\ref{D_pConjugation} $u_p$ is conjugate in $\Z_pD$ to some $n_{p,0}\in D$. By statement~(\ref{pPartesNoConjugadas}) of Proposition \ref{Hertweck}, if $\varepsilon_n(u)\ne 0$ then $n_p$ is conjugate to $n_{p,0}$, so $n_pA=n_{p,0}A$. On the other hand we have that $b_pA=\omega_A(u_p)$ is conjugate to $n_{p,0}A=\omega_A(n_{p,0})$,  so $b_pA=n_{p,0}A$ and hence $b_pA=n_pA$ for all primes $p$ dividing $|u|$ and all $n\in N$ with $\varepsilon_n(u)\ne 0$. As $|n|$ divides $|u|$, by statement~(\ref{AugmentationOrder}), we deduce that $nA=bA$ and the claim is proved.

Assume that the statement of the corollary is false. Then, by Proposition~\ref{ConjugatePartialAugmentation} and  Lemma~\ref{ParcialFueraCentroCentralizador}, there exists an $n \in G$ such that $\varepsilon_{n}(u) < 0$. So $n\in bA$, by the previous paragraph. As in the proof of Theorem \ref{D_pConjugation} there exists a subgroup $H$ in $D$ such that $H\cap A=1$ and $D/H$ is cyclic. So $D$ has a linear character $\eta$ with kernel $H$. Then, by Lemma~\ref{Leo}, we have $0 \le \sum_{h\in H} [C_G(hn):D]\varepsilon_{hn}(u)$. But if $\varepsilon_{hn}(u)\neq 0$ with $h\in H_{p'}$ then $hn\in bA\cap nH=nA\cap nH=n(A\cap H)=\{n\}$.
Hence $$0\le \sum_{h\in H} [C_G(hn):D]\varepsilon_{hn}(u)=[C_G(n):D]\varepsilon_{n}(u)<0,$$ a contradiction.
%
\end{proof}

\section{Reduction to torsion units of $D$-augmentation 1}

In this section we pursue another idea from \cite{HertweckEdinb}. In order to reduce the proof of (ZC1) to the case of units of $A$-augmentation 1, with $A$ a cyclic normal subgroup of $G$, Hertweck studied the multiplicities of the image of a representation of $G$ induced from a faithful linear character of $A$. In our study we need to replace $A$ by $D=Z(C_G(A))$. Now $D$ may not be cyclic and in that case it does not have any faithful linear character. Alternatively we consider linear characters of $D$, whose kernel does not contain any non-trivial normal subgroup of $G$. Observe that if $D$ is cyclic then the characters satisfying this condition are precisely the faithful linear characters of $D$.

Let $N$ be an abelian normal subgroup of $G$. Then for every linear character $\psi$ of $N$ and every $u\in \C G$ one has
    \begin{equation}\label{FormulaCaracteres}
    \psi^G(u) = \sum_{n\in N} \psi(n)\; [C_G(n):N]\; \varepsilon^G_n(u),
    \end{equation}
where $\psi^G$ represents the character  induced from $\psi$.
This can be checked directly by observing that both sides of the equality define linear maps on $\C G$ and checking the formula for the elements of $G$. It can be also proved using \cite[Lemma~41.10]{SehgalBook2}. If $N=\{n_1,\dots,n_m\}$ and $\psi_1,\dots,\psi_m$ are the linear characters of $N$ then (\ref{FormulaCaracteres}) yields the following
    $$\pmatriz{{c} \psi_1^G(u) \\ \vdots \\ \psi_m^G(u)} =
    T \pmatriz{{c} [C_G(n_1):N] \; \varepsilon^G_{n_1}(u) \\ \vdots \\ { } [C_{G}(n_m):N] \; \varepsilon^G_{n_m}(u)},$$
where $T=(\psi_i(n_j))$, the character table of $N$. By the Orthogonality Relations the transpose conjugate of $T$ is $mT\inv$. Multiplying by this matrix we obtain
    \begin{equation}\label{FormulaCaracteres2}
    |C_G(x)|\varepsilon_x^G(u) = \sum_{i=1}^m \overline{\psi_i(x)} \psi_i^G(u) \quad  \text{ for every } x\in N \text{ and } u\in \C G.
    \end{equation}

Let
    \begin{eqnarray*}
    \K=\K_N&=&\{K\le N : N/K \text{ is cyclic and } \\ && K \text{ does not contain any non-trivial normal subgroup of } G\}. \end{eqnarray*}
For every $K\in \K$ we select a linear character $\psi_K$ of $K$ with kernel $K$ and let $\rho_K$ be a representation of $G$ affording the induced character $\psi_K^G$. Observe that if $K_1$ and $K_2$ are conjugate in $G$ then $\psi_{K_1}^G=\psi_{K_2}^G$ and therefore we may assume that $\rho_{K_1}=\rho_{K_2}$. Let $\mathcal{C}_{\K}$ be the set of conjugacy classes in $G$ of elements of $\K$. For every $C\in \mathcal{C}_{\K}$ select a representative $K_C$ of $C$.
Let $\Q_K=\Q(\zeta_{[N:K]})$.

For a square matrix $U$ with entries in $\C$ and $\alpha\in \C$ let $\mu_U(\alpha)$ denote the multiplicity of $\alpha$ as eigenvalue of $U$. If $U^m=I$ and $\alpha$ is a root of unity then we have the following formula (see \cite{LutharPassiA5})
    \begin{equation}\label{Multiplicidad}
    \mu_U(\alpha)=\frac{1}{m} \sum_{d\mid m} \tr_{\Q(\zeta_m^d)/\Q}(\tr(U^d)\alpha^{-d}).
    \end{equation}
This formula is the bulk of the Luthar-Passi Method.

\begin{lemma}\label{PartialAugmentationTraces}
Let $G$ be a finite group such that (ZC1) holds for every proper quotient of $G$. Let $N$ be an abelian normal subgroup of $G$. Let $u$ be a unit of $\Z G$ with $\omega_N(u)\ne 1$ and let $x\in N$.

\begin{enumerate}
\item
Then
\begin{equation}\label{FormulaCaracteresK}
    |C_G(x)|\varepsilon_x(u) = \sum_{K\in \K} \tr_{\Q_K/\Q}(\overline{\psi_K(x)} \psi_K^G(u)).
\end{equation}

\item Assume moreover that $m=|u|$, $f=|\omega_N(u)|$, $x^m=1$ and $u^d$ is conjugate in $\Q G$ to an element of $G$ for every $1\ne d| m$. Then for every $h\mid f$ with $h\ne 1$ we have
    \begin{eqnarray}\label{KsPartialAugmentation}
    \sum_{K\in \K} [\Q_K:\Q] \; \mu_{\rho_K(u)}(\psi_K(x)) &=& \frac{\varphi(m)}{m} |C_G(x)| \varepsilon_{x}(u)+\\ &&\frac{1}{h}\sum_{K\in \K} [\Q_K:\Q] \; \mu_{\rho_K(u^h)}(\psi_K(x^h)) \nonumber
    \end{eqnarray}

\item Assume moreover that $G'$ is cyclic and $u^f$ is conjugate in $\Q G$ to an element $y$ of $N$. Let $u_C=|\{g\in G : x^f\in y^GK^g_C\}|$ for $C\in \mathcal{C}_{\K}$. Then we have
\begin{equation}\label{Dmultiplicidades}
\sum_{K\in C} \mu_{\rho_K(u^f)}(\psi_K(x^f))= \frac{[C_G(y):N]}{| N_G(K_C)|} u_C.
\end{equation}
\end{enumerate}
\end{lemma}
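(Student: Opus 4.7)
My plan is to attack the three parts in turn, using part~(1) as a substrate for parts~(2) and~(3) and exploiting the induced-character vanishing $\psi_K^G(g)=0$ for $g\notin N$ (valid since $N\trianglelefteq G$) to kill unwanted terms.

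\medskip
\noindent\textbf{Part (1).} Start from identity~(\ref{FormulaCaracteres2}) and group the linear characters $\psi$ of $N$ by their kernels $K$ (note $N/K$ is cyclic because $\psi$ is linear). The faithful characters of $N/K$ form a single $\Gal(\Q_K/\Q)$-orbit $\{\sigma\psi_K\}$, so the inner sum collapses into a trace, producing
$$|C_G(x)|\varepsilon_x(u) = \sum_{K\le N,\ N/K\ \text{cyclic}} \tr_{\Q_K/\Q}\bigl(\overline{\psi_K(x)}\psi_K^G(u)\bigr).$$
The key step is to show that terms with $K\notin\K$ vanish. Pick a non-trivial normal subgroup $L\le K$ of $G$; then $\psi_K^G$ factors through $G/L$. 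By the induction hypothesis, (ZC1) holds for $G/L$, so $\omega_L(u)$ is $\Q(G/L)$-conjugate to some $g_LL$, whence $\psi_K^G(u)=\psi_K^G(g_L)$. Projecting further, $g_LN=\omega_N(u)\ne 1$, so $g_L\notin N$, and $\psi_K^G(g_L)=0$ by the induced-character formula, as $N$ is normal.

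\medskip
\noindent\textbf{Part (2).} Apply formula~(\ref{Multiplicidad}) with $U=\rho_K(u)$ and $\alpha=\psi_K(x)$ (both of order dividing $m$), multiply by $[\Q_K:\Q]$, sum over $K\in\K$, and split the $d$-sum into three cases: $d=1$, $h\mid d$, and the rest. For $d=1$ the summand lies in $\Q_K\cap\Q(\zeta_m)$; the tower-law identity $[\Q_K:\Q]\tr_{\Q(\zeta_m)/\Q}(\beta)=\varphi(m)\tr_{\Q_K/\Q}(\beta)$ (valid on $\Q_K\cap\Q(\zeta_m)$ because $[\Q(\zeta_m):\Q]=\varphi(m)$) combined with part~(1) produces $\varphi(m)|C_G(x)|\varepsilon_x(u)$. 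For $h\mid d$, reindexing $d=he$ identifies the contribution as $(m/h)\sum_{K\in\K}[\Q_K:\Q]\mu_{\rho_K(u^h)}(\psi_K(x^h))$ via~(\ref{Multiplicidad}) applied to $\rho_K(u^h)$. For the remaining $d$, the conditions $h\mid f$ and $h\nmid d$ force $f\nmid d$, so $\omega_N(u^d)\ne 1$; by hypothesis $u^d$ is $\Q G$-conjugate to some $y_d\in G$, and $\omega_N(y_d)=\omega_N(u^d)\ne 1$ gives $y_d\notin N$, so the induced-character vanishing used in part~(1) yields $\psi_K^G(u^d)=\psi_K^G(y_d)=0$. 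Dividing by $m$ gives the claimed identity.

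\medskip
\noindent\textbf{Part (3).} Since $u^f$ is $\Q G$-conjugate to $y\in N$, $\rho_K(u^f)$ and $\rho_K(y)$ share eigenvalue multiplicities, and since $N$ is abelian, $\rho_K|_N=\bigoplus_{g'N\in G/N}\psi_K^{g'}$, giving
$$\mu_{\rho_K(y)}(\psi_K(x^f)) = |\{g'N\in G/N : y^{g'}x^{-f}\in K\}|.$$
Summing over $K\in C$ parameterised by $gN_G(K_C)\in G/N_G(K_C)$, and using $y^{g'n}=y^{g'}$ for $n\in N$ (because $N\le C_G(y)$, since $y\in N$ and $N$ is abelian), standard double-counting yields
$$\sum_{K\in C}\mu_{\rho_K(y)}(\psi_K(x^f)) = \frac{[C_G(y):N]}{|N_G(K_C)|}\sum_{g\in G}|y^G\cap x^fK_C^g|.$$
The main obstacle is then to verify $\sum_{g\in G}|y^G\cap x^fK_C^g|=u_C$, and this reduces to proving $|y^G\cap x^fK_C^g|\le 1$. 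Any two conjugates of $y$ differ by an element of $G'$, and two lying in the same coset $x^fK_C^g$ also differ by an element of $K_C^g$, hence by an element of $G'\cap K_C^g$. Because $(G'\cap K_C)^g=G'\cap K_C^g$ (as $G'$ is normal), the order $|G'\cap K_C^g|=|G'\cap K_C|$ is $g$-independent, and since $G'$ is cyclic, subgroups of equal order coincide: $G'\cap K_C^g=G'\cap K_C$ for every $g\in G$. This common intersection is therefore normal in $G$ and contained in $K_C\in\K$, so it is trivial. Consequently each intersection has at most one element and the sum equals $|\{g\in G: y^G\cap x^fK_C^g\ne\emptyset\}|=u_C$.
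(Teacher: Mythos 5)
Your proposal is correct and follows essentially the same route as the paper's: in part~(1) you group the linear characters of $N$ by kernel and use the induction hypothesis on $G/L$ to kill those with $L\ne 1$ normal in $\ker\psi$; in part~(2) you apply~(\ref{Multiplicidad}), split the divisor sum, and relate the $d=1$ term to~(\ref{FormulaCaracteresK}) via the tower law; and in part~(3) you compute eigenvalue multiplicities of $\rho_K(y)$ using $\rho_K|_N$ and $K\cap G'=1$ exactly as the paper does. The only (harmless) streamlining is that in parts~(1) and~(2) you derive the vanishing $\psi_K^G(u^d)=0$ directly from conjugacy to an element outside $N$ and the fact that an induced character vanishes off $N$, whereas the paper passes through vanishing of partial augmentations $\varepsilon_{nU}(\omega_U(u^d))=0$ and re-invokes~(\ref{FormulaCaracteres}); these are two faces of the same fact and both are valid.
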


\begin{proof}
(1) Let $\psi$ be a linear character of $N$ such that the kernel of $\psi$ contains a non-trivial normal subgroup $U$ of $G$. Then $\psi=\phi\circ \omega_U$ for a linear character $\phi$ of $G/U$. By the induction hypothesis $\omega_U(u)$ is conjugate in $\Q(G/U)$ to an element of $G/U$. Moreover $\omega_{N/U}(\omega_U(u))=\omega_N(u)\ne 1$  and therefore $\varepsilon_{nU}(\omega_U(u))=0$ for every $n\in N$. Then (\ref{FormulaCaracteres})  yields $\psi^G(u)=\phi^G(\omega_U(u))=0$. Hence we can drop in (\ref{FormulaCaracteres2}) all the summands labeled by linear characters of $N$ whose kernel is not in $\K$. The remaining characters are those of the form $\sigma\circ \psi_K$ for a $K\in \K$ and $\sigma\in \Gal(\Q_K/\Q)$. Hence
    $$|C_G(x)|\varepsilon_x(u) = \sum_{K\in \K} \sum_{\sigma\in \Gal(\Q_K:\Q)} \overline{\sigma \circ \psi_K(x)} (\sigma \circ \psi_K^G)(u) = \sum_{K\in \K} \tr_{\Q_K/\Q}(\overline{\psi_K(x)} \psi_K^G(u)),$$
as desired.

(2) Let $d$ be a divisor of $m$ such that $d\ne 1$ and $\omega_N(u^d)\ne 1$. By hypothesis $u^d$ is conjugate to an element of $G$ which does not belong to $N$, by Remark~\ref{ZC1PAugmentation}. Therefore $\varepsilon_n(u^d)=0$ for every $n\in N$. Hence, by (\ref{FormulaCaracteresK}) we have
$\sum_{K\in \K} \tr_{\Q_K/\Q}((\psi_K^G)(u^d)\; \psi_K(n)\inv)=0$.
Thus,
    \begin{equation}\label{SumaCero}
    \text{if } f\nmid d| m,  d\ne 1 \text{ and } n\in N \text{ then } \sum_{K\in \K} \tr_{\Q_K/\Q}((\psi_K^G)(u^d)\; \psi_K(n)\inv)=0.
    \end{equation}

For every $K\in \K$ and every integer $d$ we use the notation $\mu(K,d)=\mu_{\rho_K(u^d)}(\psi_K(x^d))$, the multiplicity of $\psi_K(x^d)$ as an eigenvalue of $\rho_K(u^d)$.
By (\ref{Multiplicidad}), for every $e|m$ we have
    \begin{eqnarray}\label{MultiplicidadesTrazas}
    &&\sum_{K\in \K} [\Q_K:\Q] \; \mu(K,e) = \\&& \frac{e}{m} \sum_{K\in \K} \sum_{d| (m/e)} [\Q_K:\Q] \; \tr_{\Q(\zeta_m^{ed})/\Q}(\psi_K^G(u^{ed})\psi_K(x)^{-ed}). \nonumber
    \end{eqnarray}
Let $d | m$ and $\alpha=\psi_K^G(u^d)\psi_K(x)^{-d}$. Clearly the image of $\psi_K$ is $\Q_K$ and $\psi^G_K(u^d)\in \Q(\zeta_m^d)$. Moreover, $x^m=1$, by hypothesis. Thus $\alpha\in \Q_K\cap \Q(\zeta_m^d)$. Let $L=\Q_K(\zeta_m^d)$. Then $[L:\Q_K]\tr_{\Q_K/\Q}(\alpha) = (\tr_{\Q_K/\Q}\circ \tr_{L/\Q_K})(\alpha) = \tr_{L/\Q}(\alpha) = (\tr_{\Q(\zeta_m^d)/\Q}\circ \tr_{L/\Q(\zeta_m^d)})(\alpha) = [L:\Q(\zeta_m^d)] \tr_{\Q(\zeta_m^d)/\Q}(\alpha)$. Therefore $[\Q(\zeta_m^d):\Q]\tr_{\Q_K/\Q}(\alpha) = [\Q_K:\Q] \tr_{\Q(\zeta_m^d)/\Q}(\alpha)$. This equality together with (\ref{FormulaCaracteresK}) yields
    \begin{eqnarray*}
    &&\sum_{K\in \K} [\Q_K:\Q] \tr_{\Q(\zeta_m^d)/\Q}(\psi_K^G(u^d)\;\psi_K(x)^{-d}) = \\
    &&[\Q(\zeta_m^d):\Q] \sum_{K\in \K} \tr_{\Q_K/\Q}(\psi_K^G(u^d)\;\psi_K(x)^{-d}) = \\
    && [\Q(\zeta_m^d):\Q] |C_G(x^d)|\varepsilon_{x^d}(u^d).
    \end{eqnarray*}
Moreover, by (\ref{SumaCero}), this is $0$ provided $f\nmid d| m$ and $d\ne 1$. Thus, for $e=1$ and $1\ne h| f$, (\ref{MultiplicidadesTrazas}) can be reduced to the following
\begin{eqnarray*}
&&\sum_{K\in \K} [\Q_K:\Q] \; \mu(K,1) =\\
&& \frac{[\Q(\zeta_m):\Q] |C_G(x)|}{m} \varepsilon_{x}(u)+\frac{1}{m}\sum_{K\in \K} \sum_{h| d | m} [\Q_K:\Q] \; \tr_{\Q(\zeta_m^d)/\Q}(\psi_K^G(u^d)\psi_K(x)^{-d}) = \\
&& \frac{\varphi(m) |C_G(x)|}{m} \varepsilon_{x}(u)+\frac{1}{h}\frac{h}{m}\sum_{K\in \K} \sum_{d | (m/h)} [\Q_K:\Q] \; \tr_{\Q(\zeta_m^{hd})/\Q}(\psi_K^G(u^{hd})\psi_K(x)^{-hd}) =\\
&& \frac{\varphi(m) |C_G(x)|}{m} \varepsilon_{x}(u)+\frac{1}{h}\sum_{K\in \K} [\Q_K:\Q] \; \mu(K,h),
\end{eqnarray*}
where in the last equality we have used (\ref{MultiplicidadesTrazas}) for $e=h$. This proves (\ref{KsPartialAugmentation}).

(3) Finally assume that $G'$ is cyclic and $u^f$ is conjugate in $\Q G$ to $y\in N$. Then $\rho_K(u^f)$, $\rho_K(y)$ and $\diag(\psi_K(y^g):g\in T)$ are conjugate in the matrices over $\C$, where $T$ is a transversal of $G/N$. Observe that $\psi_K(y^g)=\psi_K(y^h)$ if and only if $\psi_K((y,g))=\psi_K((y,h))$ if and only if $(y,g)(y,h)\inv \in K$ if and only if $(y,g)=(y,h)$ (because $K\cap G'=1$), if and only if $gh\inv \in C_G(y)$. Therefore each eigenvalue of $\rho_K(u^f)$ has multiplicity $[C_G(y):N]$. On the other hand $\psi_K(x^f)$ is an eigenvalue of $\rho_K(u^f)$ if and only if $\psi_K(x^f)=\psi_K(y^g)$ for some $g\in G$, if and only if $x^f \in y^GK$. Therefore if $C\in \mathcal{C}_{\K}$ and $K_C$ is a representative of $C$ then
    $$\sum_{K\in C} \mu_{\rho_K(u^f)}(\psi_K(x^f)) = \frac{1}{|N_G(K_C)|} \sum\limits_{g\in G, x^f \in y^G K_C^g} [C_G(y):N] = \frac{[C_G(y):N]}{| N_G(K_C)|} u_C,$$
as desired.
%
%
\end{proof}

\begin{remark}\label{FormaK}
{\rm Let $A$ be a cyclic normal subgroup of $G$ containing $G'$. Clearly every element of $\K$ does not intersect $A$ and $Z(G)$. Conversely, let $H$ be a subgroup of $G$ containing a non-trivial normal subgroup $U$ of $G$ and such that $H\cap Z(G)=1$. If $1\neq n \in U$ then $1\neq (n,g)\in A\cap U$ for some $g\in G$ and therefore $A\cap H\neq 1$. Thus, for every abelian subgroup $N$ of $G$ we have $\K_N=\{K\le N : A\cap K = Z(G)\cap K = 1 \text{ and } N/K \text{ is cyclic}\}$.

Observe that $\K_N$ can be empty. For example, this is the case if $N\cap Z(G)$ is not cyclic.}
\end{remark}

\begin{lemma}\label{Nucleos}
Assume that $A$ is a cyclic subgroup of $G$ containing $G'$. Let $N$ be a an abelian subgroup of $G$ containing $A$ and $\K=\K_N$. Then for every $K\in \K$ we have $|\K|\le |K|=\frac{|N|}{exp(N)}$.
\end{lemma}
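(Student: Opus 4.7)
The plan is to prove two separate assertions: (i) every $K \in \K$ has order exactly $|N|/\exp(N)$, and (ii) $|\K| \le |N|/\exp(N)$.

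\emph{For (i).} First observe that $N \triangleleft G$: indeed, $A \triangleleft G$ because $A/G'$ is a subgroup of the abelian group $G/G'$, and since $G/A$ is abelian every intermediate subgroup $N \supseteq A$ is automatically normal in $G$. Given $K \in \K$, set $m := |N/K|$; the cyclicity of $N/K$ forces $N^m \subseteq K$, and $N^m$ is characteristic in the abelian group $N$, hence normal in $G$. The definition of $\K$ then forces $N^m = 1$, so $\exp(N) \mid m$. Combined with $m \mid \exp(N)$ (since $N/K$ is a cyclic quotient of $N$), this gives $m = \exp(N)$ and $|K| = |N|/\exp(N)$.

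\emph{For (ii).} I argue prime by prime. By Remark~\ref{FormaK}, $\K$ consists of the $K \le N$ with $N/K$ cyclic, $A \cap K = 1$, and $Z(G) \cap K = 1$. Writing $N = \prod_p N_p$, every such $K$ decomposes as $K = \prod_p K_p$, and all three defining conditions factor prime by prime; hence $\K \cong \prod_p \K_p$, where $\K_p$ is defined analogously inside $N_p$ using $A_p$ and $Z_p := Z(G) \cap N_p$. Since $N_p$ is characteristic in $N$, it is normal in $G$, and the argument of (i) applied to $N_p$ gives $|K_p| = c_p := |N_p|/\exp(N_p)$ and hence $|K| = \prod_p c_p$. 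So it suffices to prove $|\K_p| \le c_p$ for every prime $p$.

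Fix $p$. If $A_p = 1$, then for every $n \in N_p$ and $g \in G$ the commutator $(n,g)$ lies in $N_p \cap G' \subseteq N_p \cap A = A_p = 1$, forcing $N_p \subseteq Z(G)$ and thus $Z_p = N_p$; the condition $Z_p \cap K_p = 1$ then forces $K_p = 1$, and in particular $|\K_p| \le 1 \le c_p$. If $A_p \ne 1$, I apply Pontryagin duality: each $K_p \in \K_p$ is the kernel of a surjection $\phi \colon N_p \twoheadrightarrow C_{\exp(N_p)}$, determined by $\phi \in \widehat{N}_p$ up to the $\Aut(C_{\exp(N_p)})$-action, while $A_p \cap K_p = 1$ is equivalent to $\phi(a_p)$ having order $|A_p|$ for a generator $a_p$ of $A_p$. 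The restriction map $\widehat{N}_p \twoheadrightarrow \widehat{A}_p$ is surjective with fibers of size $|N_p|/|A_p|$, and $\widehat{A}_p$ contains $\varphi(|A_p|)$ elements of full order, so the number of admissible $\phi \in \widehat{N}_p$ equals $\varphi(|A_p|)\,|N_p|/|A_p|$. Dividing by $\varphi(\exp(N_p))$ and using $\varphi(p^b)/p^b = \varphi(p^a)/p^a = (p-1)/p$ when $p^b = |A_p| \ge p$ and $p^a = \exp(N_p)$ gives $|\K_p| \le |N_p|/p^a = c_p$.

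The main obstacle is (ii) in the case $A_p \ne 1$: without the constraint $A_p \cap K_p = 1$, the number of $K_p \le N_p$ with $N_p/K_p$ cyclic of maximal order can genuinely exceed $c_p$. The bound succeeds precisely because, once $\phi(a_p)$ is forced to have order at least $p$, the ratio $\varphi(|A_p|)/|A_p| = (p-1)/p$ matches $\varphi(\exp(N_p))/\exp(N_p)$, making the count of admissible $\phi$'s exactly $c_p\,\varphi(\exp(N_p))$.
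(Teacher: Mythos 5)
Your proof is correct, but it follows a genuinely different route from the paper's. The paper fixes a decomposition $N = C \times H$ with $C$ cyclic of maximal order, chosen so that $C_p = A_p$ whenever $\exp(N_p)=\exp(A_p)$; it then shows $K \cap C = 1$ for each $K \in \K$ by a centrality argument (a non-trivial intersection $C_p \cap K$ would force a non-trivial element $x^q$ of $C_p$ into $Z(G)\cap K$), deduces $|K| = |H| = |N|/\exp(N)$, and finally identifies each $K$ with the graph of a homomorphism $H \to C$ via $\pi_2|_K$, yielding $|\K| \le |\Hom(H,C)| = |H|$. You instead obtain $|K| = |N|/\exp(N)$ by the cleaner observation that $N^{[N:K]}$ is characteristic in the normal subgroup $N$ and lies in $K$, hence is trivial; and you bound $|\K|$ by working prime by prime and counting, via Pontryagin duality, the characters of $N_p$ restricting faithfully to $A_p$, exploiting the arithmetic coincidence that $\varphi(p^k)/p^k$ is constant for $k \ge 1$. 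The paper's argument is more structural and handles all primes at once through the choice of $C$; yours is more arithmetic and modular, and your proof of the first half (the equality $|K|=|N|/\exp(N)$) is arguably shorter and more transparent. One tiny imprecision: $N^m \subseteq K$ for $m=[N:K]$ follows already from Lagrange in the abelian group $N/K$, not from its cyclicity; cyclicity is what you need for the reverse divisibility $m \mid \exp(N)$.
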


\begin{proof}
Write $N=C\times H$ with $C$ cyclic of maximal order in $N$ and selected in such a way that if $p$ is prime and $\exp(N_p)=\exp(A_p)$, then $C_p=A_p$. We claim that if $K\in \K$, then $C\cap K = 1$. Otherwise $C_p\cap K\ne 1$ for some prime $p$ and therefore $\exp(C_p)=\exp(N_p)>\exp(A_p)$. Let $x$ be a generator of $C_p$, $q=|A_p|$ and $a=(x,g)$ with $g\in G$. Then $a\in A_p$ and therefore $a^q=1$. Thus $(x^q)^g = x^q$. This proves that $x^q$ is a non-trivial central element of $G$. Then $Z(G)\cap K\ne 1$, contradicting the fact that $K$ does not contain any normal subgroup of $G$. This proves the claim.

Let $\pi_1$ and $\pi_2$ be the projections $N\rightarrow C$ and $N\rightarrow H$ along the decomposition $N=C\times H$. By the previous paragraph $K\cap \ker \pi_2 = 1$ and therefore $|K|\ge |H|=\frac{|N|}{\exp(N)}$. As $N/K$ is cyclic we have $\exp(N)\le [N:K]=\exp(N/K)\le \exp(N)$. Hence $|K|=\frac{|N|}{\exp(N)}=|H|$ and therefore $\pi_2|_K:K\rightarrow H$ is an isomorphism for every $K\in \K$. Therefore $K=\{f(h)h : h\in H\}$ for a homomorphism $f:H\rightarrow C$. (More precisely $f=\pi_1\circ \pi_2|_K\inv$.) Thus $K$ is completely determined by $f$ and hence $|\K|\le |\Hom(H,C)| = |H|$. The last equality follows easily from the fact that $C$ is cyclic and $\exp(H)$ divides the order of $C$.
%
%
%
%
%
%
\end{proof}

We are ready to prove our main result.

\begin{theorem}\label{OmegaNotD}
If $G$ is a cyclic-by-abelian finite group then every normalized torsion unit of $\Z G$ is conjugate in $\Q G$ to an element of $G$.
%
%
\end{theorem}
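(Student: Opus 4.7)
The plan is to argue by strong induction on $|G|$ and, within that, on $|u|$. Let $(G,u)$ be a minimal counterexample: $G$ is a cyclic-by-abelian finite group and $u$ is a normalized torsion unit of $\Z G$ of minimal order which is not conjugate in $\Q G$ to any element of $G$. The induction hypothesis then guarantees that (ZC1) holds for every proper subgroup and every proper quotient of $G$, and that every power $u^d$ with $1<d\mid m:=|u|$ is conjugate in $\Q G$ to an element of $G$.

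Fix a cyclic normal subgroup $A$ of $G$ containing $G'$, and set $C=C_G(A)$ and $D=Z(C)$. Since $A$ is abelian, $A\subseteq D$, so $D$ is an abelian normal subgroup of $G$ containing $G'$. If $\omega_D(u)=1$, Corollary~\ref{OmegaD} places $u$ in the $\Q G$-conjugacy class of an element of $D\subseteq G$, a contradiction; hence $\omega_D(u)\ne 1$ and $f:=|\omega_D(u)|$ satisfies $1<f\mid m$. Lemma~\ref{ParcialFueraCentroCentralizador} (applicable because (ZC1) holds for $G/N$ for every $1\ne N\le A$) forces $\varepsilon_x(u)\ge 0$ for all $x\in G\setminus D$, while Proposition~\ref{ZC1PAugmentation} forces $\varepsilon_x(u)<0$ for some $x\in G$; thus such an $x$ must lie in $D$. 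Moreover $|u^f|=m/f<m$, so by induction $u^f$ satisfies (ZC1), and since $\omega_D(u^f)=1$ Corollary~\ref{OmegaD} locates $u^f$ in the $\Q G$-conjugacy class of some $y\in D$.

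Now take $N=D$ in Lemma~\ref{PartialAugmentationTraces} and apply \eqref{KsPartialAugmentation} with $h=f$. The left-hand side is non-negative, being a sum of positive multiples of non-negative integer multiplicities; rearranging and using $\varepsilon_x(u)<0$ yields
\begin{equation*}
\sum_{K\in\K}[\Q_K:\Q]\,\mu_{\rho_K(u^f)}(\psi_K(x^f))\;\ge\;\frac{f\,\varphi(m)}{m}\,|C_G(x)|\,|\varepsilon_x(u)|.
\end{equation*}
Using \eqref{Dmultiplicidades} the LHS becomes $\sum_{C\in\mathcal{C}_{\K}}[\Q_{K_C}:\Q]\,\frac{[C_G(y):D]}{|N_G(K_C)|}\,u_C$, and Lemma~\ref{Nucleos} controls both $|\K|$ and the degrees $[\Q_{K_C}:\Q]=\varphi([D:K_C])$; this yields an upper bound for the LHS purely in terms of invariants of $G$, $D$ and $A$.

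The main obstacle is to squeeze this upper bound strictly below the lower bound above. Two structural features should be exploited. First, every $K\in\K$ satisfies $K\cap A=1$ (Remark~\ref{FormaK}), so $\psi_K$ separates distinct $G$-conjugates of $y\in D$, giving sharp control of $u_C=|\{g\in G:x^f\in y^GK_C^g\}|$ in terms of $[C_G(y):D]$ and $|N_G(K_C)|$. Second, by induction $u^d$ is conjugate to an element of $G\setminus D$ whenever $1\ne d\mid m$ and $f\nmid d$, so all such multiplicity terms vanish, already making \eqref{KsPartialAugmentation} as tight as the setup allows. Once the resulting numerical clash is extracted, no minimal counterexample exists and (ZC1) holds for every cyclic-by-abelian finite group.
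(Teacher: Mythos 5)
Your setup matches the paper's: the minimal-counterexample framework, the choice of $A$, $D=Z(C_G(A))$, the deduction that $\omega_D(u)\ne 1$ from Corollary~\ref{OmegaD}, that $\varepsilon_x(u)<0$ forces $x\in D$ via Lemma~\ref{ParcialFueraCentroCentralizador}, and the plan to pit the lower bound from \eqref{KsPartialAugmentation} with $h=f$ against an upper bound extracted from \eqref{Dmultiplicidades} and Lemma~\ref{Nucleos}. Up to there you have reproduced the skeleton of the paper's argument.

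However, there is a genuine gap precisely where you say ``once the resulting numerical clash is extracted, no minimal counterexample exists.'' The two bounds do \emph{not} clash. When you carry out the estimate you are gesturing at --- using $u_C \le f_1\,|C_G(x)|\,[G:C_G(y)]$, $|\K|\le |K| = |D|/\exp(D)$, and $\tfrac{1}{f_2}\le \tfrac{\varphi(m)}{m}\tfrac{[D:K]}{\varphi([D:K])}$ --- you arrive at exactly
\[
\frac{1}{f}\sum_{K\in\K}\mu_{\rho_K(u^f)}(\psi_K(x^f))\;\le\;\frac{\varphi(m)}{m}\,\frac{|C_G(x)|}{\varphi([D:K])},
\]
while the non-negativity of the left side of \eqref{KsPartialAugmentation2} together with $\varepsilon_x(u)\le -1$ gives the matching lower bound. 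These are equal, not contradictory. The whole content of the paper's endgame is to exploit \emph{equality} in every intermediate inequality. From the equalities one deduces, among other things, that $\varphi(f_2)=1$, that $|C_G(x^{f_1})|=f_1|C_G(x)|$, that $X_{C,y_1}\ne\emptyset$ for all $y_1\in y^G$, that every prime dividing $|D|$ divides $m$, and that $\psi_K(x^g)$ is never an eigenvalue of $\rho_K(u)$. Only then does a further eigenvalue argument show that $f_2=2$ and that every eigenvalue of $\rho_K(u)$ has even order, whence $\rho_K(u^{m/2})=-I$, giving $\psi_K^G(u^{m/2})=-[G:D]$; but $u^{m/2}$ is conjugate to an element of $G\setminus D$, so $\psi_K^G(u^{m/2})=0$ --- that is the actual contradiction. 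Your proposal stops short of all of this, and the claim that a numerical clash is already lurking in the bounds is false; you need the equality analysis and the final $u^{m/2}$ step, which are the hardest and most original parts of the proof.
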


\begin{proof}
By means of contradiction we assume that $G$ is a counterexample of minimal order of the theorem and $u$ is a normalized torsion unit of minimal order of $\Z G$ which is not conjugate to an element of $G$ in $\Q G$. We select a cyclic subgroup $A$ of $G$ with $G/A$ abelian and take $D=Z(C_G(A))$ and $\K=\K_D$. By Proposition~\ref{ConjugatePartialAugmentation}, we may assume without loss of generality that $\varepsilon_x(u)<0$ for some $x\in G$. This implies that the order of $x$ divides the order of $u$ by statement~(\ref{AugmentationOrder}) of Proposition~\ref{Hertweck}. Set $m=|u|$ and $f=|\omega_D(u)|$. By assumption $u^d$ is conjugate in $\Q G$ to an element of $G$ for every $1\ne d\mid m$.

By Lemma \ref{ParcialFueraCentroCentralizador}, $x\in D$ and by Corollary~\ref{OmegaD}, $\omega_D(u)\ne 1$. Thus $1\ne f|m$ and in particular, $u^f$ is conjugate in $\Q G$ to some $y\in D$. By the first induction hypothesis (ZC1) holds for every proper quotient of $G$ and hence we can use Lemma \ref{PartialAugmentationTraces} for $N=D$, $u$ and $x$. Since $|K|=\frac{|D|}{\exp(D)}$ for every $K\in \K$, by Lemma \ref{Nucleos}, and $[\Q_K:\Q]=\varphi([D:K])$ we can write (\ref{KsPartialAugmentation}) for $h=f$ as
\begin{equation}\label{KsPartialAugmentation2}
    \sum_{K\in \K} \mu_{\rho_K(u)}(\psi_K(x)) = \frac{\varphi(m)}{m} \frac{|C_G(x)|}{\varphi([D:K])} \varepsilon_{x}(u)+\frac{1}{f}\sum_{K\in \K}  \mu_{\rho_K(u^f)}(\psi_K(x^f)).
    \end{equation}

We claim that
\begin{equation}\label{Desigualdad}
\frac{1}{f}\sum_{k\in \K}\mu_{\rho_K(u^f)}(\psi_K(x^f))\le \frac{\varphi(m)}{m}\frac{| C_G(x)|}{\varphi( [D:K])}.
\end{equation}
Write $f=f_1f_2$ with $f_1$ and $f_2$ positive integers such that the prime divisors of $f_1$ divide $|D|$ and $(f_2,|D|)=1$. Then $m=f_2m'$ with all prime divisors of $m'$ dividing $|D|$. Note that $\langle x^f \rangle=\langle x^{f_1}\rangle$ and so $C_G(x^f)=C_G(x^{f_1})$. Consider the map $\alpha:C_G(x^{f_1})\rightarrow A$ given by $g\mapsto (x,g)$. If $a=(x,g)$ then $x^g=ax$ and therefore $x^{f_1} = (x^{f_1})^g = a^{f_1} x^{f_1}$. Hence the image of $\alpha$ is contained in $\{a\in A : a^{f_1}=1\}$ and this is a subgroup of $A$ of order $\le f_1$. On the other hand $\alpha(g)=\alpha(h)$ if and only if $gh\inv \in C_G(x)$. Therefore
    \begin{equation}\label{CGxfx}
    [C_G(x^{f_1}):C_G(x)] \le f_1.
    \end{equation}

Assume that $K\in \K$ and $y_1$ and $y_2$ are elements of $G$ in the same conjugacy class such that $y_1K=y_2K$. Then $y_2\in y_1A\cap y_1K=\{y_1\}$ because $A\cap K=1$. Therefore, if $C\in \mathcal{C}_{\K}$, then $\{g\in G : (x^f)^g \in y^G K_C\}$ is the disjoint union of the subsets $X_{C,y_1}=\{g\in G : (x^f)^g \in y_1 K_C\}$ with $y_1\in y^G$. If $g,h\in X_{C,y_1}$ then $(x^f)^{gh\inv} =  ((x^f)^hk)^{h\inv} = x^f k^{h\inv}$ for some $k\in K_C$. Then $(x^f,gh\inv)\in A \cap K^{h\inv} = 1$ and hence $gh\inv \in C_G(x^f)$. Conversely, if $gh\inv \in C_G(x^f)$ and $g\in X_{C,y_1}$ then $h\in X_{C,y_1}$. This proves that if $X_{C,y_1}$ is not empty, then it is a coset of $C_G(x^f)=C_G(x^{f_1})$. Therefore for $u_C$ as in Lemma \ref{PartialAugmentationTraces} we get
    \begin{eqnarray}\label{PreDesigualdad3}
    u_C &=& \sum_{y_1 \in y^G} |X_{C,y_1}| \le |C_G(x^{f_1})| \; |y^G| =  |C_G(x^{f_1})|\;  [G:C_G(y)] \\ &\le& f_1 \; |C_G(x)|\;  [G:C_G(y)] , \nonumber
    \end{eqnarray}
%
%
%
%
%
and hence
\begin{equation}\label{Desigualdad3}
\sum_{C\in \mathcal{C}_{\K}}\frac{u_C}{| N_G(K_C)|}\le f_1\frac{| C_G(x)|}{| C_G(y)|}\sum_{C\in \mathcal{C}_{\K}}[G:N_G(K_C)]=
f_1\frac{| C_G(x)|}{| C_G(y)|}|\K|.\end{equation}
%
%
Thus
\begin{equation}\label{Desigualdad4}
\frac{1}{f} [C_G(y):D] \sum_{C\in \mathcal{C}_{\K}} \frac{u_C}{| N_G(K_C)|}\le \frac{1}{f_2}| \K| [C_G(x):D].
\end{equation}

By Lemma \ref{Nucleos}, $|\K| \le |K|$. Moreover, every prime divisor of $m'$ divides $\exp(D)=[D:K]$ and therefore 
    \begin{equation}\label{Desigualdad4.5}
    \frac{1}{f_2} \le \frac{\varphi(f_2)}{f_2}\frac{\varphi(m')}{m'} \frac{[D:K]}{\varphi([D:K])}=\frac{\varphi(m)}{m} \frac{[D:K]}{\varphi([D:K])}.
    \end{equation}
Thus
\begin{equation}\label{Desigualdad5}
\frac{1}{f_2}| \K| [C_G(x):D]\le \frac{\varphi(m)}{m}\frac{[D:K]}{\varphi([D:K])}| K| [C_G(x):D] = \frac{\varphi(m)}{m}\frac{|C_G(x)|}{\varphi([D:K])}.
\end{equation}
Combining (\ref{Dmultiplicidades}), (\ref{Desigualdad4}) and (\ref{Desigualdad5}) we conclude that
    \begin{eqnarray}\label{Desigualdad6}
    \frac{1}{f}\sum_{k\in \K}\mu_{\rho_K(u^f)}(\psi_K(x^f))&\le& \frac{1}{f} [C_G(y):D] \sum_{C\in \mathcal{C}_K} \frac{u_C}{| N_G(K_C)|}\\
    &\le& \frac{\varphi(m)}{m}\frac{| C_G(x)|}{\varphi( [D:K])}. \nonumber
    \end{eqnarray}
This proves (\ref{Desigualdad}).

As the left side of (\ref{KsPartialAugmentation2}) is non-negative we have
    $$\frac{1}{f}\sum_{k\in \K}\mu_{\rho_K(u^f)}(\psi_K(x^f)) \ge -\frac{\varphi(m)}{m}\frac{| C_G(x)|}{\varphi( [D:K])} \varepsilon_x(u) \ge
\frac{\varphi(m)}{m}\frac{| C_G(x)|}{\varphi( [D:K])}$$
because $\varepsilon_x(u)<0$. Therefore the equality holds in (\ref{Desigualdad}) and therefore the equality holds in all the inequalities from (\ref{CGxfx}) to (\ref{Desigualdad6}). This has the following consequences: $\varphi(f_2)=1$, so that $f_2\le 2$; $|C_G(x^{f_1})|=f_1 C_G(x)$ and hence the conjugacy class of $x$ contains all the elements of the form $a^ix$, where $a$ is an element of $A$ of order $f_1$; $m$ is divisible by all the primes dividing $|D|$; $X_{C,y_1}\ne \emptyset$ for every $y_1\in y^G$; the left hand side of (\ref{KsPartialAugmentation2}) is zero and hence $\psi_K(x)$ is not an eigenvalue of $\rho_G(u)$ for every $K\in \K$. Applying this to conjugates of $x$ we deduce that $\psi_K(x^g)$ is not an eigenvalue of $\rho_K(u)$ for every $g\in G$ and every $K\in \K$.
We claim that $f_2\ge 0$ and all the eigenvalues of $\rho_K(u)$ have even order.
If $\xi$ is an eigenvalue of $\rho_K(u)$ then $\xi^f$ is an eigenvalue of both $\rho_K(u^f)$ and $\rho_K(y)$. Thus $\xi^f=\psi_K(y_1)$ for some $y_1\in y^G$. As $X_{C,y_1}\ne\emptyset$, $\xi^f=\psi_K((x^f)^g)$ for some $g\in G$ and therefore $\xi = \zeta_f^j \psi_K(x^g)$ for some $j$. However, $a^i x^g$ is conjugate to $x^g$ for every $0\le i<f_1$, where $a$ is an element of $A$ of order $f_1$. Hence $\psi_K(a^i x^g)=\zeta_{f_1}^i \psi_K(x^g)$ is not an eigenvalue of $\rho_K(u)$ for every $0\le i < f_1$. This implies that $f_2=2$ and $j$ is odd. Therefore the order of $\xi$ is even. This proves the claim. Then all the eigenvalues of $\rho_K(u^{\frac{m}{2}})$ are equal to $-1$ and hence $\psi_K^G(u^{\frac{m}{2}})=-[G:D]$. However $u^{\frac{m}{2}}$ is conjugate to an element of $G\setminus D$ and therefore $\psi_K^G(u^{\frac{m}{2}})=0$, a contradiction.
\end{proof}

\bibliographystyle{amsalpha}
\bibliography{References}

\end{document}